\documentclass[12pt]{amsart}

\theoremstyle{plain}

\newtheorem{theorem}{Theorem}[section]

\newtheorem{lemma}[theorem]{Lemma}
\newtheorem{proposition}[theorem]{Proposition}
\newtheorem{corollary}[theorem]{Corollary}
\newtheorem{observation}[theorem]{Observation}

\theoremstyle{definition}

\newtheorem{definition}[theorem]{Definition}
\newtheorem{remark}[theorem]{Remark}
\newtheorem{example}[theorem]{Example}
\newtheorem{notation}[theorem]{Notation}

\newcommand{\ep}{\varepsilon}

\newcommand{\cc}{\subset\subset}
\newcommand{\sipky}{\nearrow\!\!\!\nearrow}
\newcommand{\sipka}{\nearrow}
\newcommand{\dist}{\mathrm{dist}}

\newcommand{\R}{\mathbb{R}}
\newcommand{\N}{\mathbb{N}}

\renewcommand{\epsilon}{\varepsilon}
\renewcommand{\phi}{\varphi}
\renewcommand{\tilde}{\widetilde}

\usepackage{amssymb}
\usepackage{amsmath}
\usepackage{amsthm}

\textwidth=5.5 true in

\hfuzz5pt % Don't bother to report over-full boxes if over-edge is < 2pt
\overfullrule=5pt

\def \cal{\mathcal}

\begin{document}

\title{\large On extensions of d.c.\ functions\\
and convex functions}

\author{Libor Vesel\'y}
\address{Dipartimento di Matematica\\
Universit\`a degli Studi\\
Via C.~Saldini 50\\
20133 Milano\\
Italy}

\author{Lud\v ek Zaj\'\i\v cek}
\address{Charles University\\
Faculty of Mathematics and Physics\\
Sokolovsk\'a 83\\
186 75 Praha 8\\
Czech Republic}

\email{vesely@mat.unimi.it}
\email{zajicek@karlin.mff.cuni.cz}

 \subjclass[2000]{Primary 52A41; Secondary 26B25, 46B99}

 \keywords{d.c.\ function, d.c.\ mapping, delta-convex mapping, convex function, extension, Banach space}

 \thanks{}

\begin{abstract}
We show how our recent results on compositions of d.c.\ functions (and
mappings) imply positive results on extensions of d.c.\ functions (and
mappings). Examples answering two natural relevant questions are
presented. Two further theorems, concerning extendability of continuous
convex functions from a closed subspace of a normed linear space,
complement recent results of J.~Borwein, V.~Montesinos and J.~Vanderwerff.
\end{abstract}

\maketitle

%%%%%%%%%%%%%%%%%%%%%%%%%%%%%%%%%%%%%%%%%%%%%%%%%%%%%%

%%%%%%%%%%%%%%%%%% Headings
\markboth{L.~Vesel\'y and L.~Zaj\'{\i}\v{c}ek}{On extensions of d.c.\ functions and convex functions}
%%%%%%%%%%%%%%%%%%%%%%%%%%%

\section*{Introduction}

Let $C$ be a nonempty convex set in a (real) normed linear space $X$. A
function $f\colon C\to\R$ is called {\em d.c.}\ (or ``delta-convex'')
if it can be represented as a difference of two continuous convex
functions on $C$. An extension of this notion, the notion of a {\em d.c.\ mapping}
$F\colon C\to Y$ (see Definition~\ref{D:dc}) where $Y$ is a normed linear space,
was introduced in \cite{VeZa1} and studied in \cite{VeZa1},
\cite{DuVeZa}, \cite{VeZa2} and some other papers by the authors.

The present paper concerns the following natural questions.
\begin{enumerate}
\item[(Q1)] When is it possible to extend a d.c.\ function (or a d.c.\ mapping) on $C$
to a d.c.\ function (or a d.c.\ mapping) on the whole $X$?
\item[(Q2)] When is it possible to extend a continuous convex function
on a closed subspace $Y$
of $X$ to a continuous convex function on $X$?
\end{enumerate}

In Section 2, we show how results of \cite{VeZa2} on compositions of
d.c.\ functions and mappings imply positive results concerning (Q1).
For instance, Corollary~\ref{elpecka}(a) reads as follows.

{\em
Let $X$ be a (subspace of some) $L_p(\mu)$ space with $1<p\le2$. Let
$C\subset X$ be a convex set with a nonempty interior. Then each continuous
convex function  $f$ on $C$, which is Lipschitz on every bounded subset of
$\mathrm{int}\,C$, admits a d.c.\ extension to the whole $X$.}

\noindent
(Note that only the
case of $C$ unbounded is interesting; cf.~Lemma~\ref{F:1}(c).)
The needed results from \cite{VeZa2}, together with some definitions and
auxiliary facts, are presented in Section~1 (Preliminaries).

Section 3 contains two counterexamples.
The first one (Example~\ref{pasovec})
shows that, in the above mentioned
Corollary~\ref{elpecka}(a), we cannot conclude that $f$ admits a
continuous {\em convex} extension (even for $X= \R^2$).
The second counterexample (Example~\ref{koulovec}) shows that,
in the above mentioned Corollary~\ref{elpecka}(a),
it is not possible
to relax the assumption that $f$ is Lipschitz on bounded sets by assuming that $f$ is only
locally Lipschitz on $C$.

In the last Section~4, we consider the question (Q2)
of extendability of
continuous convex functions from a closed subspace $Y$ to the whole $X$.
The authors of~\cite{BMV} obtained a necessary and
sufficient condition on $Y$ in terms of nets in $Y^*$ and, using
Rosenthal's extension theorem, they proved the following interesting
corollary (\cite[Corollary~4.10]{BMV}).

{\em If $X$ is a Banach space and $X/Y$ is separable, then each continuous convex function on
$Y$ admits a continuous convex extension to $X$.}

\noindent
 Using methods from \cite{H} and \cite{VeZa2}, we give a
 necessary and sufficient condition on $Y$ of a different type in Theorem~\ref{univ}.
As an application, we present an
elementary alternative proof of the above mentioned
  \cite[Corollary~4.10]{BMV}, which works also for noncomplete $X$.

%%%%%%%%%%%%%%%%%%%%%%%%%%%%%%%%%%%%%%%%%%%%%%%%%%%%%%%%%%%%%%%%%%%%%%%%%%%%%%%%%

\section{Preliminaries}\label{S:prelim}

We consider only normed linear spaces over the
reals $\R$. For a normed linear space $X$ we use the following
fairly standard notations:
$B_X$ denotes the closed unit ball; $U(c,r)$ is the open ball centered in $c$ with radius $r$;
$[x,y]$ is the closed segment $\mathrm{conv}\{x,y\}$ (the meaning of the symbols
$(x,y)$ and $(x,y]=[y,x)$ is clear). By definition, the distance of a
set from the empty set $\emptyset$ is $\infty$, and the restriction of a
mapping to $\emptyset$ has all properties like continuity, Lipschitz
property, boundedness and so on.

We will frequently use also the following less standard notation.

\begin{notation}
Let $A,B,A_n,B_n$ ($n\in\N$) be subsets of a normed linear space $X$. We
shall write:
\begin{itemize}
\item $A\subset\subset B$ whenever there exists $\epsilon>0$
such that $A+\epsilon B_X\subset B$;
\item $A_n\nearrow A$ whenever $A_n\subset A_{n+1}$ for each
$n\in\N$, and $\bigcup_{n\in\N}A_n=A$;
\item $A_n\nearrow\!\!\!\nearrow A$ whenever $A_n\subset\subset A_{n+1}$ for each
$n\in\N$, and $\bigcup_{n\in\N}A_n=A$.
\end{itemize}
\end{notation}

We shall use the following simple facts about convex sets and functions.

\begin{lemma}[{\cite[Lemma~2.3]{VeZa2}}]\label{L:dn}
Let $C\subset X$ be nonempty, open and convex. Let $\{C_n\}$ be a sequence of
convex sets with nonempty interiors, such that $C_n\sipka C$.
Then there exists a sequence $\{D_n\}$ of nonempty,
bounded, open,
convex sets such that $D_n\sipky C$, and $D_n\subset\subset C_n$ for each $n$.
\end{lemma}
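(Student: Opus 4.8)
The plan is to reduce to the case of \emph{open} sets and then realize each $D_n$ as a bounded, open ``inner parallel body'' of $C_n$. First I would replace each $C_n$ by its interior $G_n:=\mathrm{int}\,C_n$. These are nonempty, open and convex, and $G_n\subset G_{n+1}$ by monotonicity of the interior. A short segment argument shows $\bigcup_n G_n=C$, i.e.\ $G_n\sipka C$: given $x\in C$ and a fixed $z\in G_1$, openness of $C$ lets us push $x$ slightly away from $z$ to a point $x'=x+\delta(x-z)\in C$, hence $x'\in C_N$ for some $N\ge1$; since $z\in G_1\subset\mathrm{int}\,C_N$ and $x$ lies on the half-open segment $[z,x')$, we get $x\in\mathrm{int}\,C_N=G_N$. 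Because $D_n\cc G_n$ forces $D_n\cc C_n$, it then suffices to build the $D_n$ inside the $G_n$.

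Next, fix $x_0\in G_1$, set $d:=\dist(x_0,X\setminus G_1)>0$, choose a strictly decreasing sequence $t_n\searrow0$ with $t_1<d$ (e.g.\ $t_n=d/(2n)$), and define
\[
  D_n:=\{x\in X:\dist(x,X\setminus G_n)>t_n\}\cap U(x_0,n).
\]
Each $D_n$ is bounded (inside $U(x_0,n)$) and open (a strict superlevel set of the $1$-Lipschitz map $\dist(\cdot,X\setminus G_n)$, intersected with an open ball), and $x_0\in D_n$ since $t_n<d\le\dist(x_0,X\setminus G_n)$, so $D_n$ is nonempty. Convexity is the one genuinely infinite-dimensional point: I would show that $x\mapsto\dist(x,X\setminus G_n)$ is concave on $G_n$ by proving directly that if $U(x_1,r_1),U(x_2,r_2)\subset G_n$ then $U(\theta x_1+(1-\theta)x_2,\ \theta r_1+(1-\theta)r_2)\subset G_n$, writing each point of the larger ball as the corresponding convex combination of points of the two given balls.

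It remains to verify the three requisite properties. For $\bigcup_n D_n=C$: any $x\in C$ lies in some $G_N$ with $\dist(x,X\setminus G_N)>0$, so $x\in D_m$ as soon as $t_m$ is small enough and $\|x-x_0\|<m$. For $D_n\cc C_n$: if $x\in D_n$ and $\|v\|\le t_n/2$, then $\dist(x+v,X\setminus G_n)>t_n/2>0$, whence $D_n+\tfrac{t_n}{2}B_X\subset G_n\subset C_n$. The real work is $D_n\cc D_{n+1}$, and this is where the strict monotonicity $t_{n+1}<t_n$ is indispensable: using $\dist(\cdot,X\setminus G_{n+1})\ge\dist(\cdot,X\setminus G_n)$ (as $G_n\subset G_{n+1}$), for $\|v\|\le\varepsilon_n$ one gets $\dist(x+v,X\setminus G_{n+1})>t_n-\varepsilon_n$, which exceeds $t_{n+1}$ provided $\varepsilon_n<t_n-t_{n+1}$; taking moreover $\varepsilon_n<1$ keeps $\|x+v-x_0\|<n+1$, so $D_n+\varepsilon_n B_X\subset D_{n+1}$.

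The main obstacle I anticipate is the simultaneous calibration of the thresholds $t_n$: they must tend to $0$ (so that $\bigcup D_n=C$, and so that the ``room'' $t_n$ giving $D_n\cc C_n$ is available) yet retain a \emph{strictly positive} gap $t_n-t_{n+1}$ at every index (so that $D_n\cc D_{n+1}$ holds even when the original sequence stalls, $C_n=C_{n+1}$, in which case equal thresholds would force $D_n=D_{n+1}$ and destroy the strict nesting). A harmonically spaced sequence such as $t_n=d/(2n)$ meets both demands, and the concavity of the distance-to-complement function is what makes the resulting sets convex; everything else is routine Lipschitz bookkeeping with the distance function.
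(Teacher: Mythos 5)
Your proof is correct and complete. There is nothing in the paper to compare it against line by line: the paper does not prove this lemma but imports it from \cite{VeZa2} (Lemma~2.3 there), so your argument stands as a valid self-contained substitute. The route you take --- passing to $G_n=\mathrm{int}\,C_n$ (with the segment argument showing $\bigcup_n G_n=C$), then carving out bounded inner parallel bodies $D_n=\{x:\dist(x,X\setminus G_n)>t_n\}\cap U(x_0,n)$ with strictly decreasing thresholds $t_n\searrow 0$ --- is the natural one, and the two points that genuinely need care are both handled: convexity of $D_n$ via concavity of $\dist(\cdot,X\setminus G_n)$ on $G_n$ (your two-ball argument is a correct proof of that concavity), and the strict gap $t_n-t_{n+1}>0$, which is exactly what rescues $D_n\subset\subset D_{n+1}$ when the original sequence stalls at $C_n=C_{n+1}$. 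The only blemish is the parenthetical choice $t_n=d/(2n)$, which is undefined in the degenerate case $G_1=X$, where $d=\dist(x_0,\emptyset)=\infty$ by the paper's convention; your general prescription (any strictly decreasing $t_n\searrow 0$ with $t_1<d$) covers that case, so nothing is lost.
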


\begin{lemma}[{\cite[Fact~1.6]{VeZa2}}]\label{F:1}
Let $C\subset X$ be a nonempty convex set, $f\colon C\to\R$ be a
convex function.
\begin{enumerate}
\item[(a)] If $C$ is open and bounded and $f$ is continuous,
           then $f$ is bounded below on $C$.
\item[(b)] If $f$ is bounded on $C$ then $f$ is Lipschitz on each $D\subset\subset C$.
\item[(c)] If $f$ is Lipschitz then it admits a Lipschitz convex
extension to $X$.
\end{enumerate}
\end{lemma}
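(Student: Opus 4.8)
For (a), the plan is to produce a continuous affine minorant of $f$ and then exploit the boundedness of $C$, avoiding any compactness argument. Fix $x_0\in C$. Since $C$ is open and $f$ is continuous at $x_0$, $f$ is locally bounded above there, so the epigraph of $f$ has nonempty interior; a Hahn--Banach separation of the point $(x_0,f(x_0))$ from this interior yields a subgradient $x^*\in X^*$, i.e.\ $f(x)\ge f(x_0)+x^*(x-x_0)$ for all $x\in C$. Because $C$ is bounded, $R:=\sup_{x\in C}\|x-x_0\|<\infty$, and hence $f(x)\ge f(x_0)-\|x^*\|\,R$ on $C$, which is the desired lower bound. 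The only delicate point is the nonemptiness of the subdifferential at $x_0$, and this is exactly what continuity at the interior point $x_0$ guarantees.

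For (b), I would use the standard segment-stretching estimate. Let $M:=\sup_C|f|$ and choose $\epsilon>0$ with $D+\epsilon B_X\subset C$ (if $D=\emptyset$ there is nothing to prove, by the stated conventions). Given distinct $x,y\in D$, put $z:=y+\epsilon\,(y-x)/\|y-x\|$; then $z\in D+\epsilon B_X\subset C$, and $y$ lies on the segment $[x,z]$ with $y=(1-t)x+tz$ for $t=\|y-x\|/(\|y-x\|+\epsilon)\le\|y-x\|/\epsilon$. Convexity then gives $f(y)-f(x)\le t\,(f(z)-f(x))\le(2M/\epsilon)\,\|x-y\|$. Interchanging the roles of $x$ and $y$ yields $|f(x)-f(y)|\le(2M/\epsilon)\,\|x-y\|$, so $f$ is Lipschitz on $D$ with constant $2M/\epsilon$.

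For (c), the cleanest route is infimal convolution of $f$ with a multiple of the norm. Let $L$ be a Lipschitz constant of $f$ on $C$ and define
\[
\tilde f(x):=\inf_{y\in C}\bigl[\,f(y)+L\|x-y\|\,\bigr],\qquad x\in X.
\]
Fixing any $y_0\in C$ and combining the bound $f(y)\ge f(y_0)-L\|y-y_0\|$ with the triangle inequality shows $\tilde f(x)\ge f(y_0)-L\|x-y_0\|>-\infty$, so $\tilde f$ is finite; being the infimal convolution of the convex function equal to $f$ on $C$ and $+\infty$ elsewhere with the convex function $L\|\cdot\|$, it is convex. It is $L$-Lipschitz, since each $x\mapsto f(y)+L\|x-y\|$ is $L$-Lipschitz and a (finite) infimum of $L$-Lipschitz functions is $L$-Lipschitz. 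Finally $\tilde f=f$ on $C$: taking $y=x$ gives $\tilde f(x)\le f(x)$, while the inequality $f(x)-f(y)\le L\|x-y\|$ gives $\tilde f(x)\ge f(x)$. Thus $\tilde f$ is the required Lipschitz convex extension.

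The unifying obstacle across the three parts is the absence of compactness in an infinite-dimensional $X$. This is precisely why (a) is handled by a supporting functional from Hahn--Banach rather than by extracting a minimizing subsequence from the bounded set $C$, and why in (c) the finiteness of the infimum (hence the legitimacy of the infimal-convolution formula) must be checked by hand rather than inferred from attainment. Part (b), by contrast, is purely a local convexity estimate and presents no difficulty.
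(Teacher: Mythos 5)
Your proposal is correct in all three parts. Note, however, that this paper does not prove Lemma~\ref{F:1} at all: it is imported verbatim from \cite[Fact~1.6]{VeZa2}, so there is no internal argument to compare yours against. On the merits: (a) via a supporting functional is fine --- continuity at the interior point $x_0$ gives local upper boundedness, hence nonempty interior of the epigraph, and the separation argument (after ruling out a vertical separating functional, which again uses that $x_0$ is interior to the domain) produces the subgradient you need; the boundedness of $C$ then finishes it. A slightly more elementary alternative, avoiding Hahn--Banach entirely, writes $x_0=\frac{t}{1+t}x+\frac{1}{1+t}w$ with $w=x_0+t(x_0-x)$ and $t$ small enough (using boundedness of $C$) that $w$ stays in a ball where $f$ is bounded above; convexity then bounds $f(x)$ from below. (b) is the standard segment-stretching estimate, correctly executed with the paper's definition of $\subset\subset$ (note $t=\|y-x\|/(\|y-x\|+\epsilon)\le\|y-x\|/\epsilon$, as you say). (c) is the classical McShane/infimal-convolution extension, and you correctly verify the three points that need checking in infinite dimensions: finiteness of the infimum, preservation of convexity (since $C$ is convex), and agreement with $f$ on $C$ via the Lipschitz inequality. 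No gaps.
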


\begin{lemma}\label{K}
Let $f$ be a continuous convex function on an open convex subset $C$ of
a normed linear space $X$. Then there exists a sequence $\{D_n\}$ of
nonempty bounded open convex sets such that $D_n\sipka C$ and $f$ is
Lipschitz (and hence bounded) on each $D_n$.
\end{lemma}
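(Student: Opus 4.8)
The plan is to reduce the statement to the two facts already available: Lemma~\ref{L:dn}, which upgrades any convex exhaustion $C_n\sipka C$ (by sets with nonempty interior) to a bounded open convex exhaustion $D_n\sipky C$ with $D_n\cc C_n$, and Lemma~\ref{F:1}, which converts boundedness of $f$ into a Lipschitz estimate on sets that are $\cc$-contained in the domain. Thus the whole problem reduces to producing a sequence of convex sets $C_n\sipka C$, each with nonempty interior, on each of which $f$ is \emph{bounded}.

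First I would fix a point $x_0\in C$ (the set $C$ being nonempty) and set $M=f(x_0)$. The natural candidate is
\[
 C_n=\{x\in C: f(x)<M+n\}\cap U(x_0,n).
\]
Each $C_n$ is open and convex (the strict sublevel set of a continuous convex function is open and convex, and so is the ball), it is bounded (it sits inside $U(x_0,n)$), and it is nonempty (it contains $x_0$ for every $n$, since $f(x_0)=M<M+n$). Moreover $C_n\subset C_{n+1}$ because both defining constraints weaken as $n$ grows, and $\bigcup_n C_n=C$ because for each $x\in C$ the value $f(x)$ is finite and $\|x-x_0\|<n$ for $n$ large; hence $C_n\sipka C$.

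The key point is that $f$ is bounded on each $C_n$. It is bounded above by $M+n$ by construction, and since $C_n$ is open and bounded while $f|_{C_n}$ is continuous and convex, Lemma~\ref{F:1}(a) supplies a lower bound. With $f$ bounded on each $C_n$ in hand, I would apply Lemma~\ref{L:dn} to the exhaustion $\{C_n\}$ (each $C_n$ has nonempty interior, being open and nonempty) to obtain nonempty bounded open convex sets $D_n$ with $D_n\sipky C$ and $D_n\cc C_n$. Finally, Lemma~\ref{F:1}(b) applied to $f|_{C_n}$ (bounded on $C_n$) together with $D_n\cc C_n$ yields that $f$ is Lipschitz, and hence bounded, on each $D_n$. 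Since $D_n\sipky C$ implies $D_n\sipka C$, these $D_n$ are the required sets.

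I expect the only real subtlety to be arranging that $f$ is bounded (not merely bounded above) on the intermediate sets $C_n$: the upper bound is built into the sublevel-set definition, but boundedness below is exactly what forces the intersection with the ball $U(x_0,n)$, so that the boundedness hypothesis of Lemma~\ref{F:1}(a) is met. Everything else is the routine verification that $\{C_n\}$ is a genuine exhaustion of $C$ by convex sets with nonempty interior.
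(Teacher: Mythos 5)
Your proof is correct and follows essentially the same route as the paper's: sublevel sets of $f$ around a fixed point $x_0$, upgraded by Lemma~\ref{L:dn} to bounded open convex sets $D_n\sipky C$, with Lemma~\ref{F:1}(a),(b) converting boundedness into Lipschitzness. The only cosmetic difference is that you intersect the sublevel sets with the balls $U(x_0,n)$ so that Lemma~\ref{F:1}(a) applies directly to $C_n$, whereas the paper keeps $C_n$ unbounded and instead applies Lemma~\ref{F:1}(a) to the bounded sets $D_{n+1}$, then uses $D_n\cc D_{n+1}$ in Lemma~\ref{F:1}(b).
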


\begin{proof}
Fix $x_0\in C$ and consider the nonempty open convex sets
$C_n:=\{x\in C: f(x)<f(x_0)+n\}$. By Lemma~\ref{L:dn}, there exist
nonempty bounded open convex sets $D_n$ such that $D_n\cc C_n$ and
$D_n\sipky C$. Using Lemma~\ref{F:1}(a), it is easy to see that $f$ is
bounded on each $D_{n+1}$. Hence, by Lemma~\ref{F:1}(b), $f$ is Lipschitz
on each $D_n$.
\end{proof}

Let us recall the following easy known fact (see, e.g., \cite[Theorem~1.25]{valentine}):
if $A,B$ are convex sets in
a vector space then
\begin{equation}\label{Val}
\mathrm{conv}(A\cup B)=\bigcup_{0\le t\le1}[(1-t)A+tB]=\bigcup_{a\in A,\, b\in B}[a,b]\,.
\end{equation}

\begin{lemma}\label{conv}
Let $Y$ be a closed subspace of a normed linear space $X$,
$C\subset Y$ and $A\subset X$ convex sets.
\begin{enumerate}
\item[(a)] $\mathrm{conv}(A\cup C)\cap Y=\mathrm{conv}[(Y\cap A)\cup C]$.
\item[(b)] If $\mathrm{int}\,A\ne\emptyset$ and $A$ is dense in $X$,
then $A=X$.
\item[(c)] If $C$ is open in $Y$, $A$ is open in $X$ and
$A\cap C\ne\emptyset$, then $\mathrm{conv}(A\cup C)$ is open.
\end{enumerate}
\end{lemma}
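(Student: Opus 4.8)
The plan is to treat the three parts separately, with part (c) being the only one requiring genuine work. For part (a), the inclusion ``$\supseteq$'' is immediate: the set $(Y\cap A)\cup C$ is contained both in $A\cup C$ and in the subspace $Y$, so its convex hull lies in $\mathrm{conv}(A\cup C)$ and in $Y$. For the reverse inclusion I would first dispose of the degenerate cases $A=\emptyset$ or $C=\emptyset$ (where both sides reduce to $A\cap Y$, resp.\ $C$), and then assume both sets nonempty so that the identity \eqref{Val} applies: a point $z\in\mathrm{conv}(A\cup C)\cap Y$ can be written as $z=(1-t)a+tc$ with $a\in A$, $c\in C$, $t\in[0,1]$. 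If $t=1$ then $z=c\in C$; if $t<1$, then solving $a=\frac{1}{1-t}z-\frac{t}{1-t}c$ and using that $z,c\in Y$ together with the fact that $Y$ is a subspace shows $a\in Y\cap A$. In either case $z\in\mathrm{conv}[(Y\cap A)\cup C]$.

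For part (b) I would invoke the well-known fact that a convex set $A$ with nonempty interior satisfies $\mathrm{int}\,A=\mathrm{int}\,\overline{A}$ (which follows from the standard observation that the segment joining an interior point to a point of $\overline{A}$, minus the latter endpoint, lies in $\mathrm{int}\,A$). Since $A$ is dense, $\overline{A}=X$, hence $\mathrm{int}\,A=\mathrm{int}\,X=X$, and therefore $A=X$.

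The hard part will be part (c). I would again use \eqref{Val} to write $\mathrm{conv}(A\cup C)=\bigcup_{t\in[0,1]}[(1-t)A+tC]$. For every $t\in[0,1)$ the set $(1-t)A+tC$ is open in $X$, since $(1-t)A$ is open (a positive rescaling of the open set $A$) and adding the arbitrary set $tC$ preserves openness; so the union $W:=\bigcup_{t\in[0,1)}[(1-t)A+tC]$ is open. The only points not manifestly covered by an open piece are those of $C$ (the endpoint $t=1$), and the crux is to show $C\subset W$. Here I would exploit the hypothesis $A\cap C\neq\emptyset$: fix $p\in A\cap C$ and choose $r>0$ with $U(p,r)\subset A$. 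Given $c\in C$, the point $q:=c+\delta(c-p)$ lies in $Y$ and tends to $c$ as $\delta\to0^+$, so by openness of $C$ in $Y$ we have $q\in C$ for some small $\delta>0$; writing $t=\frac{1}{1+\delta}\in(0,1)$ gives $c=(1-t)p+tq$. Then
\[
U\bigl(c,(1-t)r\bigr)=(1-t)U(p,r)+tq\subset(1-t)A+tC\subset W,
\]
so $c$ is an interior point lying in $W$. This yields $C\subset W$, whence $\mathrm{conv}(A\cup C)=W$ is open.

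The delicate point to get right is the construction of $q$: one must push slightly past $c$ along the ray from $p$, staying inside $Y$, and rely on $C$ being open \emph{in} $Y$ (not in $X$) to guarantee $q\in C$; the transverse directions in $X$ are then recovered precisely from the full $X$-ball $U(p,r)\subset A$ after rescaling by $(1-t)>0$.
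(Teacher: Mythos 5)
Your proof is correct and takes essentially the same route as the paper's: parts (a) and (b) are argued identically, and in part (c) your construction of $q=c+\delta(c-p)$ is exactly the paper's step of finding a point $y\in C$ with $c\in(y,p]$, so that $\mathrm{conv}(A\cup C)$ is exhibited as a union of the open sets $(1-t)A+tC$ with $t<1$ (the paper just writes the mirrored parametrization $(1-t)C+tA$, $t>0$). No gaps.
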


\begin{proof}
(a) The inclusion ``$\supset$'' is obvious. To prove the other
inclusion, consider an arbitrary $y\in Y\cap\mathrm{conv}(A\cup C)$.
Then $y\in[a,c]$ for some $a\in A$, $c\in C$. If $y\ne c$ then
necessarily $a\in Y$ (since $y,c\in Y$) and hence
$y\in \mathrm{conv}[(Y\cap A)\cup C]$; and the last formula is trivial
for $y=c$.\\
(b) follows, e.g., from the well-known fact that
$\mathrm{int}(\overline{A})=\mathrm{int}\,A$ whenever
$\mathrm{int}\,A$ is nonempty.\\
(c) Fix an arbitrary $a_0\in A\cap C$. For each $x\in C$, there
obviously exists $y\in C\setminus\{a_0\}$ such that $x\in(y,a_0]$;
consequently, there exists
$t\in(0,1]$ with $x\in(1-t)C+tA$. Now we are done, since
$$\mathrm{conv}(A\cup C)=
C\cup\bigcup_{0<t\le1}[(1-t)C+tA]=
\bigcup_{0<t\le1}[(1-t)C+tA]$$
and the members of the last union are open.
\end{proof}

In the rest of this section, we collect some facts about d.c.\ functions
and mappings, which we will need in the next sections.

Let $C$ be a convex set in a normed linear space $X$. Recall that a
function $f\colon C\to\R$ is {\em d.c.}\ (or ``delta-convex'')
if it can be represented as a difference of two continuous convex
functions on $C$. The following generalization to the case of vector-valued
mappings on $C$ was studied in \cite{VeZa1} for open $C$, and in
\cite{VeZa2} for a general (convex) $C$.

\begin{definition}\label{D:dc}
Let $X,Y$ be normed linear spaces, $C\subset X$ be a convex set, and
$F\colon C\to Y$ be a continuous mapping. We say that $F$ is {\em d.c.}\
(or ``delta-convex'') if there exists a continuous (necessarily convex)
function $f\colon C\to\R$ such that $y^*\circ F+f$ is convex on $C$
whenever $y^*\in Y^*$, $\|y^*\|\le1$. In this case we say that $f$
controls $F$, or that $f$ is a {\em control function} for $F$.
\end{definition}

\begin{remark}\label{R:dc}
It is easy to see (cf.~\cite{VeZa1}) that:
\begin{enumerate}
\item[(a)] a mapping
$F=(F_1,\ldots,F_m)\colon C\to\R^m$ is d.c.\ if and only if each of its
components $F_k$ is a d.c.\ function;
\item[(b)] the notion of delta-convexity does not depend on the choice
of equivalent norms on $X$ and $Y$.
\end{enumerate}
\end{remark}

\begin{lemma}[{\cite[Lemma~5.1]{VeZa2}}]\label{ndc}
Let $X,Y$ be normed linear spaces, let $A\subset X$ be an open convex
set with $0\in A$, and let $F\colon A\to Y$ be a mapping. Suppose there exist
$\lambda\in(0,1)$ and a sequence of balls $B(x_n,\delta_n)\subset A$ such
that $\{x_n\}\subset\lambda A$, $\delta_n\to0$ and $F$ is unbounded on
each $B(x_n,\delta_n)$. Then $F$ is not d.c.\ on $A$.
\end{lemma}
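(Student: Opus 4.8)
The plan is to argue by contradiction: assume that $F$ is d.c.\ on $A$, fix a continuous control function $f$ as in Definition~\ref{D:dc}, and show that then $F$ would have to be bounded on $B(x_n,\delta_n)$ for all large $n$. The device that makes this tractable is to pass from the vector-valued $F$ to a single scalar convex function. Since $y^*\circ F+f$ is convex for every $y^*$ with $\|y^*\|\le1$, and since $\sup_{\|y^*\|\le1}y^*(F(x))=\|F(x)\|$, the pointwise supremum
$$\Phi(x):=\sup_{\|y^*\|\le1}\bigl(y^*\circ F+f\bigr)(x)=\|F(x)\|+f(x)$$
is a supremum of convex functions, hence convex; it is also finite and, being a sum of continuous functions, continuous on $A$. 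This reduces the whole problem to a statement about the single finite convex function $\Phi$: it suffices to show that $\Phi$ is bounded above on each $B(x_n,\delta_n)$ for $n$ large.

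The key geometric input is the dilation structure $\{x_n\}\subset\lambda A$ together with $0\in A$. By continuity of $\Phi$ at $0$ there are $s>0$ and $L\in\R$ with $B(0,s)\subset A$ and $\Phi\le L$ on $B(0,s)$. Writing $a_n:=x_n/\lambda\in A$, I would observe that
$$B(x_n,(1-\lambda)s)=\{(1-\lambda)u+\lambda a_n: u\in B(0,s)\},$$
so that, for every $n$ with $\delta_n\le(1-\lambda)s$ (which holds for all large $n$, as $\delta_n\to0$), each $z\in B(x_n,\delta_n)$ can be written as $z=(1-\lambda)u+\lambda a_n$ with $u\in B(0,s)$. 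Convexity of $\Phi$ then yields
$$\Phi(z)\le(1-\lambda)\Phi(u)+\lambda\Phi(a_n)\le(1-\lambda)L+\lambda\Phi(a_n),$$
a finite bound independent of $z$ (here $\Phi(a_n)<\infty$ because $a_n\in A$ and $\Phi$ is finite). Hence $\Phi$ is bounded above on $B(x_n,\delta_n)$.

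Finally I would convert this back into a statement about $F$ to reach the contradiction. On the bounded open convex ball $B(x_n,\delta_n)$ the continuous convex function $f$ is bounded below by Lemma~\ref{F:1}(a), so $\|F\|=\Phi-f$ is bounded above on $B(x_n,\delta_n)$, contradicting the assumption that $F$ is unbounded there. The main point to get right is the first step --- recognizing that $\|F\|+f$ is convex, which linearizes the vector-valued hypothesis --- and then the bookkeeping of the second step, where the role of $\lambda<1$ is precisely to create the fixed slack $(1-\lambda)s$ into which the shrinking balls $B(x_n,\delta_n)$ eventually fit; the rest is routine.
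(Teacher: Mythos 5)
This lemma is not proved in the paper---it is imported from \cite[Lemma~5.1]{VeZa2}---so there is no in-paper proof to compare with; judged on its own, your argument is correct and complete. It is also the natural (and, in substance, the original) argument: the convexity of $\|F\|+f$ for a control function $f$, the dilation step exploiting $0\in A$, $x_n\in\lambda A$ and $\delta_n\to0$ to bound $\|F\|+f$ above on the balls $B(x_n,\delta_n)$ for large $n$, and Lemma~\ref{F:1}(a) to bound $f$ below there, yielding the contradiction.
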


The following result was proved in \cite[Theorem~18(i)]{DuVeZa} for
$X,Y$ Banach spaces, but the proof therein works for normed linear
spaces as well.

\begin{proposition}\label{P:lip}
Let $X,Y$ be normed linear spaces, $C\subset X$ be a bounded open convex
set, and $F\colon C\to Y$ be a d.c.\ mapping with a Lipschitz control
function. Then $F$ is Lipschitz.
\end{proposition}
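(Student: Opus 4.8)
The plan is to reduce everything to scalar convex functions and then to control the oscillation of $F$ by that of its control function. Fix a Lipschitz control function $f$ for $F$, with constant $L$; since $C$ is bounded, $f$ is bounded on $C$ as well. Given $x,z\in C$, choose by Hahn--Banach a functional $y^*\in Y^*$ with $\|y^*\|=1$ and $y^*(F(z)-F(x))=\|F(z)-F(x)\|$. By Definition~\ref{D:dc} both $g:=f+y^*\circ F$ and $\tilde g:=f-y^*\circ F$ are continuous convex functions on $C$, and $y^*\circ F=\tfrac12(g-\tilde g)$, while $g+\tilde g=2f$ is convex and Lipschitz. Thus it suffices to bound $\|F(z)-F(x)\|=\tfrac12\left((g-\tilde g)(z)-(g-\tilde g)(x)\right)$ by a fixed multiple of $\|z-x\|$, uniformly over all admissible $y^*\in B_{Y^*}$.

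First I would show that $F$ is bounded on $C$. For every $y^*\in B_{Y^*}$ the function $g=f+y^*\circ F$ is continuous and convex on the bounded open set $C$, hence bounded below by Lemma~\ref{F:1}(a); consequently $y^*\circ F=g-f$ is bounded below on $C$, since $f$ is bounded. Applying this to $-y^*$ as well shows that $y^*\circ F$ is bounded on $C$ for each fixed $y^*$, and by homogeneity the same holds for every $y^*\in Y^*$. Hence the set $F(C)$ is weakly bounded in $Y$, and therefore norm bounded (uniform boundedness principle, applied in $Y^{**}$); write $M:=\sup_C\|F\|<\infty$.

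The heart of the matter is a one-dimensional curvature estimate. Restrict attention to a chord, i.e.\ to a direction $e$ and the maximal open segment $\{p+te\}\subset C$ it determines, and let $\Gamma,\tilde\Gamma,\Phi,\Psi$ denote the restrictions of $g,\tilde g,f,y^*\circ F$. Then $\Gamma,\tilde\Gamma,\Phi$ are convex, $\Phi$ is $L$-Lipschitz, $\Psi=\tfrac12(\Gamma-\tilde\Gamma)$ is a difference of convex functions, and the one-sided derivatives exist with $\Gamma'_+=\Phi'_++\Psi'_+$ and $\tilde\Gamma'_+=\Phi'_+-\Psi'_+$. Since $\Gamma'_+$ and $\tilde\Gamma'_+$ are nondecreasing, for $t_1<t_2$ both $(\Phi'_+(t_2)-\Phi'_+(t_1))\pm(\Psi'_+(t_2)-\Psi'_+(t_1))\ge0$, forcing $|\Psi'_+(t_2)-\Psi'_+(t_1)|\le\Phi'_+(t_2)-\Phi'_+(t_1)\le 2L$. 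In words, along every chord the slope of $y^*\circ F$ oscillates by at most $2L$. Combining this with $|\Psi|\le M$ and anchoring $\Psi'_+$ against the mean slope of $\Psi$ over the chord gives a Lipschitz bound for $y^*\circ F$ on that chord with constant $\tfrac{2M}{\ell}+2L$, where $\ell$ is the length of the chord.

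The main obstacle is precisely the dependence on $\ell$: this estimate is uniform only for chords bounded away from length $0$, whereas near $\partial C$ some chords are arbitrarily short and then the mean-slope anchor is too weak (note that Lemma~\ref{F:1}(b) already yields Lipschitzness on each $D\subset\subset C$, but with a constant that degenerates toward the boundary). Overcoming this is the technical core. The key fact is that the curvature of $y^*\circ F$ is dominated by that of $f$ not merely along a single direction but \emph{simultaneously in all directions} --- this is exactly what convexity of both $g$ and $\tilde g$ encodes --- so the slope of $y^*\circ F$ cannot develop a tangential blow-up near the boundary without forcing a corresponding blow-up of the (bounded) gradient of $f$. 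To make this precise I would anchor the slope $\Psi'_+$ at $x$ not on the short chord through $x$ in direction $e$, but by transporting a uniformly bounded slope from a long chord through a fixed interior ball $U(x_0,r)\subset C$, estimating the transverse change of $\Psi'_+$ by the same two-sided domination of second differences applied in the transverse direction. This uniform near-boundary slope estimate is the step I expect to require the most care.
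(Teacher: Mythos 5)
Your preparatory steps are correct: the boundedness of $F(C)$ (Lemma~\ref{F:1}(a) applied to $f\pm y^*\circ F$, giving weak boundedness, then the uniform boundedness principle applied to $F(C)$ viewed in $Y^{**}$ as functionals on the Banach space $Y^*$ --- this is fine even if $Y$ is incomplete), and the estimate that the oscillation of $\Psi'_+$ along any single chord is at most $2L$, yielding the Lipschitz constant $\tfrac{2M}{\ell}+2L$ on a chord of length $\ell$. But these are the routine parts; note that Lemma~\ref{F:1}(b) already gives Lipschitzness on every $D\cc C$, so the entire content of Proposition~\ref{P:lip} is a constant that does \emph{not} degenerate near the boundary, i.e.\ exactly the step you leave open (``the step I expect to require the most care''). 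So what you have is a correct reduction plus a plan, not a proof; and the plan, in its natural reading, fails. First, $\Psi'_+$ in a fixed direction has no pointwise transverse regularity: take $g(s,t)=|s-ct|$, $\tilde g\equiv 0$, so $f=\Psi=\tfrac12|s-ct|$; then $d^+_{(1,0)}\Psi$ jumps by $1$ as the base point crosses the crease line transversally. Hence ``the transverse change of $\Psi'_+$'' can only be controlled in integrated form, via mixed second differences; and there the two-sided domination you invoke (mixed differences of the convex functions $g,\tilde g$ are sandwiched between diagonal pure second differences) gives, for a parallelogram with sides $\tau u$ and $\eta v$, an error bound of order $L(\tau+\eta)$ per step. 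Summed along a transport path of length comparable to $\mathrm{dist}(x,U(x_0,r))$, this produces a total error of order $L\cdot\mathrm{diam}\,C$ rather than $O(\tau)=O(\|z-x\|)$, which is useless for small $\tau$. The root difficulty is that you work throughout with $\Psi'_+$, and $u\mapsto d^+_u\Psi(x)$ is \emph{not} subadditive ($\Psi$ is only d.c.), which is what forces you into transport in the first place.

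The missing idea is to exploit convexity of $g=f+y^*\circ F$ and $\tilde g=f-y^*\circ F$ \emph{separately}, where sublinearity of directional derivatives is available. Writing $N$ for a common bound on $|g|,|\tilde g|$ over $C$ and fixing $U(x_0,r)\subset C$, one has: (a) for any $x\in C$ and any unit $v$ pointing from $x$ into $U(x_0,r/2)$, the forward chord has length at least $r/2$, so $d^+_vg(x)\le 4N/r$ and likewise for $\tilde g$; (b) for the opposite (outward) direction, $d^+_{-v}g(x)=2d^+_{-v}f(x)-d^+_{-v}\tilde g(x)\le 2L+d^+_v\tilde g(x)\le 2L+4N/r$, using $g+\tilde g=2f$ and the convexity inequality $d^+_{-v}\tilde g(x)+d^+_v\tilde g(x)\ge 0$; (c) since $u\mapsto d^+_ug(x)$ is sublinear ($g$ convex), and since every unit vector is a positive combination, with coefficients bounded by a constant depending only on $r$ and $\mathrm{diam}\,C$, of directions as in (a) and directions as in (b), one gets $d^+_ug(x)\le K(N,L,r,\mathrm{diam}\,C)$ for \emph{all} $x$ and $u$; hence $g$, $\tilde g$, and therefore $\Psi=g-f$ and $F$, are uniformly Lipschitz. (An alternative standard route uses the control inequality $\|F(\lambda a+(1-\lambda)b)-\lambda F(a)-(1-\lambda)F(b)\|\le \lambda f(a)+(1-\lambda)f(b)-f(\lambda a+(1-\lambda)b)\le 2\lambda(1-\lambda)L\|a-b\|$ on two long segments joining $x$ and $z$ to the interior ball and crossing at a single point, with weights $1-\lambda\approx\|z-x\|/r$, so that all errors are $O(\|z-x\|)$.) Either ingredient is genuinely different from transverse transport of $\Psi'_+$, and one of them is indispensable. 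For the record, the paper itself offers no proof to compare against: it quotes the result from \cite[Theorem~18(i)]{DuVeZa}.
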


\begin{lemma}[{\cite[Lemma~2.1]{VeZa2}}]\label{L:hartman}
Let $X,Y$ be normed linear spaces, $C\subset X$ a nonempty
convex set, and  $F\colon C\to Y$ a mapping. Let $\emptyset \neq D_n \subset C$
($n\in\N$)
be convex sets such that $D_n\sipka C$ and, for each $n$,
$\mathrm{dist}(D_n,C\setminus D_{n+1})>0$,
$D_n$ is relatively open in $C$,
and
$F|_{D_n}$ is d.c.\ with a
control function  $\gamma_n\colon D_n\to \R$
which is either bounded or Lipschitz. Then $F$ is d.c.\ on $C$.
\end{lemma}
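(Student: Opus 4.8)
The plan is to produce a single continuous convex \emph{control function} $\gamma\colon C\to\R$ for $F$ by gluing the local control functions $\gamma_n$. The guiding observation is that the class of control functions is stable under two operations: if $g$ controls $F$ on a convex set, then so does $g+h$ for every convex $h$; and if $g_1,g_2$ both control $F$ on the same convex set, then so does $\max(g_1,g_2)$, since $y^*\circ F+\max(g_1,g_2)=\max(y^*\circ F+g_1,\,y^*\circ F+g_2)$ is a maximum of two convex functions. Moreover, the restriction of a control function to a convex subset is again a control function. Finally, since the $D_n$ are convex, increasing and cover $C$, a function that is convex on each $D_n$ is convex on all of $C$ (a segment $[x,y]\subset C$ lies in some $D_N$), and a function continuous on each relatively open $D_n$ is continuous on $C$. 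Hence it suffices to build a continuous convex $\gamma$ on $C$ whose restriction to each $D_n$ controls $F|_{D_n}$; then $y^*\circ F+\gamma$ is convex on every $D_n$, hence on $C$, so $\gamma$ controls $F$ and $F$ is d.c. (The case $D_{n_0}=C$ for some $n_0$ is trivial, so we assume $D_n\ne C$ for all $n$.)

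I would construct $\gamma$ as the increasing limit of convex functions $g_n\colon C\to\R$ obtained by a telescoping ``$\max$'' recursion. Start from a continuous convex extension $g_1$ of $\gamma_1$ to $C$; such extensions exist because each $\gamma_n$ is bounded or Lipschitz (in the Lipschitz case directly by Lemma~\ref{F:1}(c), the bounded case being reduced to it via Lemma~\ref{F:1}(b) using the gap $\dist(D_{n-1},C\setminus D_n)>0$). Having $g_n$ (convex, continuous, controlling $F$ on $D_n$), put
\[
 g_{n+1}:=\max\bigl(g_n,\ \widehat\gamma_{n+1}+\lambda_{n+1}\,\dist(\cdot,D_{n-1})\bigr),
\]
where $\widehat\gamma_{n+1}$ is a continuous convex extension of $\gamma_{n+1}$ to $C$, the convex $1$-Lipschitz penalty $\dist(\cdot,D_{n-1})$ vanishes on $D_{n-1}$ and is $\ge\dist(D_{n-1},C\setminus D_n)>0$ off $D_n$, and $\lambda_{n+1}\ge 0$ is a large slope.

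The crucial point of each step is that $g_{n+1}$ controls $F$ on the \emph{larger} set $D_{n+1}$. This rests on the elementary patching fact that if $\psi$ is convex on a segment $J$, $\phi$ is convex on the subsegment $K=J\cap D_n$, and $\phi\le\psi$ on $J\setminus K$, then $\max(\phi,\psi)$ is convex on $J$ (only a one-sided-derivative check at the two endpoints of $K$ is needed). Applying this with $\phi=y^*\circ F+g_n$ (convex on all of $D_n$) and $\psi=y^*\circ F+\widehat\gamma_{n+1}+\lambda_{n+1}\dist(\cdot,D_{n-1})$ (convex on $D_{n+1}$, being a control function plus a convex penalty), one chooses $\lambda_{n+1}$ so large that $g_n\le\widehat\gamma_{n+1}+\lambda_{n+1}\dist(\cdot,D_{n-1})$ on $D_{n+1}\setminus D_n$; then $\max(\phi,\psi)=y^*\circ F+g_{n+1}$ is convex on every segment of $D_{n+1}$, i.e.\ $g_{n+1}$ controls $F$ on $D_{n+1}$.

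The main obstacle, which I expect to be the technical heart, is to guarantee that the increasing sequence $\{g_n\}$ converges to a \emph{finite, continuous} limit. On a fixed $D_m$ the penalty $\dist(\cdot,D_{n-1})$ vanishes once $n-1\ge m$, so for $n\ge m+1$ one has $g_{n+1}=\max(g_n,\gamma_{n+1})$ there; thus the difficulty is precisely to prevent the later control functions $\gamma_{n+1}$ from inflating the values already settled on the core $D_m$. The two demands on the new term pull in opposite directions: it must \emph{dominate} $g_n$ on $D_{n+1}\setminus D_n$ (needed for control), yet \emph{not exceed} $g_n$ on $D_{n-1}$ (needed for locking and convergence). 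Reconciling them is where the strictly positive gaps $\dist(D_{n-1},C\setminus D_n)>0$ and the freedom in choosing $\lambda_{n+1}$ and the extensions $\widehat\gamma_{n+1}$ are essential, and where the bounded/Lipschitz dichotomy of the $\gamma_n$ (through Lemma~\ref{F:1}) supplies the finite bounds making these choices possible. Once $\{g_n\}$ is shown to stabilize on each $D_m$, the limit $\gamma=\lim_n g_n$ is convex, is continuous on each $D_m$ hence on $C$, and restricts to a control function of $F|_{D_m}$ for every $m$; by the reduction of the first paragraph, $\gamma$ controls $F$ on $C$, so $F$ is d.c.
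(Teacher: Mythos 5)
Your skeleton (stability of control functions under maxima, Hartman-style gluing with distance penalties, locking on earlier sets, passing to the stabilized limit) is the right strategy; it is the method of the cited proof in \cite{VeZa2} and of the paper's closest in-house analogue, the proof of Theorem~\ref{indiv}, $(iv)\Rightarrow(ii)$. But two specific steps of your implementation fail, and they are not cosmetic.

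First, the extensions $\widehat\gamma_{n+1}$ need not exist. You claim every $\gamma_n$ admits a continuous convex extension to $C$, handling the bounded case ``via Lemma~\ref{F:1}(b) using the gap $\dist(D_{n-1},C\setminus D_n)>0$''. Lemma~\ref{F:1}(b) requires the \emph{strong} inclusion $D\subset\subset D_n$, i.e.\ $D+\ep B_X\subset D_n$; the hypothesis of the lemma only gives the \emph{relative} gap, and these differ badly when $C$ is not open (if $\mathrm{int}\,C=\emptyset$, which the lemma allows, no nonempty $D$ satisfies $D+\ep B_X\subset D_n\subset C$, so Lemma~\ref{F:1}(b) says nothing). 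Concretely, for $C=\{(x_1,x_2):x_2\ge x_1^2\}\subset\R^2$ and $D_n=C\cap U(0,n)$, the function $-\sqrt{x_2-x_1^2}$ is convex, continuous and bounded on each $D_n$ but Lipschitz on no $D_{n-1}$, so ``bounded $+$ gap $\Rightarrow$ Lipschitz one level down'' is false. Worse, extensions can fail to exist at all: for $C=X=\R^2$, $D_n=U(0,r_n)$, the bounded convex function $\gamma_1(x)=-\sqrt{r_1^2-\|x\|^2}$ is a perfectly legitimate control function (e.g.\ it controls $F\equiv 0$; the $\gamma_n$ are \emph{given} data, you may not replace them), yet it has no continuous convex extension to $C$: any convex $h$ extending it satisfies, along the segment $[0,se_1]$ with $s>r_1$, $h(se_1)\ge -r_1+\frac{t(s-t)}{\sqrt{r_1^2-t^2}}\to\infty$ as $t\to r_1^-$. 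The correct device is to \emph{not} extend $\gamma_{n+1}$: define $g_{n+1}$ piecewise on $D_{n+1}$ (as $\max(g_n,\psi_{n+1})$ on $D_n$ and as $\psi_{n+1}:=\gamma_{n+1}+a_n+b_n\dist(\cdot,D_{n-2})$ on $D_{n+1}\setminus D_n$), proving convexity of $y^*\circ F+g_{n+1}$ along segments by a gluing lemma whose hypothesis is ``old $\le$ new on $D_n\setminus D_{n-1}$'', i.e.\ near the free boundary of $D_n$; this forces the penalty to be anchored \emph{two} indices back, since $\dist(\cdot,D_{n-2})$ has the positive lower bound $d_{n-2}$ on $D_n\setminus D_{n-1}$, whereas your $\dist(\cdot,D_{n-1})$ vanishes near $\partial D_{n-1}$ and cannot simultaneously satisfy locking and domination there.

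Second, the finiteness of the constants $c_{n+1},\lambda_{n+1}$ --- which you yourself identify as the technical heart --- is simply not available as you set things up, because the $D_n$ may be unbounded. Even in the all-Lipschitz case, two Lipschitz convex functions can differ by an unbounded amount on an unbounded set: take $C=X=\R^2$, $D_n=\{x_2<n\}$, $F\equiv0$, $\gamma_n(x)=n\|x\|$. Then along the ray $\{te_1\}\subset D_{n-1}$ one has $g_n(te_1)\le\max_{k\le n}k|t|+\mathrm{const}=n|t|+\mathrm{const}$ for \emph{any} admissible choices made at earlier steps, while $\gamma_{n+1}(te_1)=(n+1)|t|$, so $\sup_{D_{n-1}}(\gamma_{n+1}-g_n)=\infty$ and no constant $c_{n+1}$ can achieve $\widehat\gamma_{n+1}-c_{n+1}\le g_n$ on $D_{n-1}$. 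The missing idea is a preliminary normalization: fix $x_0\in D_1$ and replace each $D_n$ by $D_n\cap U(x_0,n)$. This preserves every hypothesis (convexity, relative openness, $D_n\nearrow C$, the positive gaps, and restrictions of the $\gamma_n$ remain control functions) and makes each $D_n$ bounded, hence each $\gamma_n$ --- whether originally bounded or Lipschitz --- bounded; only after this reduction are all the suprema and infima needed for the choices of $a_n$ and $b_n$ finite. This is precisely the care taken in the paper's own analogous construction (proof of Theorem~\ref{indiv}), where the sets are arranged to be bounded before the induction starts.
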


An important ingredient of the present paper is application of the
following two results on compositions of d.c.\ mappings.

\begin{proposition}[\cite{VeZa1},\cite{VeZa2}]\label{P:veza}
Let $X,Y,Z$ be normed linear spaces, $A\subset X$ and $B\subset Y$
convex sets, and $F\colon A\to B$ and $G\colon B\to Z$ d.c.\ mappings.
If $G$ is Lipschitz and has a Lipschitz control function,
then $G\circ F$ is d.c.\ on $A$.
\end{proposition}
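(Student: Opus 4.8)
The plan is to prove that the composition $G\circ F$ of two d.c. mappings is d.c. when $G$ is Lipschitz with a Lipschitz control function. The natural strategy is to produce an explicit control function for $G\circ F$ built from the data we are given: a control function $f$ for $F$, a Lipschitz control function $g$ for $G$, and the Lipschitz constant $L$ of $G$. Let me think about what the control function should look like.

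Let me recall the setup. $F\colon A\to B$ is d.c., so there is a continuous convex $f\colon A\to\mathbb{R}$ with $y^*\circ F+f$ convex for every $y^*\in Y^*$, $\|y^*\|\le 1$. And $G\colon B\to Z$ is d.c. with control function $g\colon B\to\mathbb{R}$ that is Lipschitz (say with constant $M$), and $G$ itself is $L$-Lipschitz.

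I want to show $z^*\circ(G\circ F)+h$ is convex on $A$ for all $z^*\in Z^*$, $\|z^*\|\le 1$, for a suitable continuous convex $h$. Now $z^*\circ G+g$ is convex on $B$ since $g$ controls $G$. The key idea is that $g\circ F$ should be d.c. on $A$ because $g$ is a Lipschitz convex function on $B$ and $F$ is d.c.: a Lipschitz convex function composed with a d.c. mapping is d.c. (this is the scalar-valued, convex-outer case). More precisely, since $g$ is convex and Lipschitz, $g$ is itself d.c., and one expects $g\circ F$ to be controlled by $Mf$ where $M$ is the Lipschitz constant of $g$ — because for $\|y^*\|\le 1$ the function $y^*\circ F + f$ is convex, and $g$ being convex and $M$-Lipschitz can be written as a supremum of affine functions with slopes of norm at most $M$.

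So the concrete plan is as follows. First, I would fix $z^*$ with $\|z^*\|\le 1$. Write $z^*\circ(G\circ F)=(z^*\circ G)\circ F$. Since $g$ controls $G$, the function $\psi:=z^*\circ G+g$ is convex and (being a sum of a Lipschitz and a Lipschitz function, or by continuity) continuous on $B$; moreover $z^*\circ G$ is $L$-Lipschitz and $g$ is $M$-Lipschitz, so $\psi$ is a convex function that is Lipschitz on $B$. Then I decompose
\[
z^*\circ(G\circ F)=\psi\circ F-g\circ F.
\]
The claim reduces to showing that both $\psi\circ F$ and $g\circ F$ are d.c. on $A$, and in fact that they admit a common control function, so that the difference is d.c. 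The crucial lemma is: if $\Phi\colon B\to\mathbb{R}$ is convex and $K$-Lipschitz and $F$ is controlled by $f$, then $\Phi\circ F$ is convex-plus-controlled by $Kf$, i.e. $\Phi\circ F+Kf$ is convex. This is proved by writing $\Phi=\sup_{\alpha}\ell_\alpha$ where each $\ell_\alpha(y)=\langle y^*_\alpha,y\rangle+c_\alpha$ is affine with $\|y^*_\alpha\|\le K$; then $\ell_\alpha\circ F+Kf=(y^*_\alpha\circ F+Kf)+c_\alpha$, and $y^*_\alpha\circ F+Kf$ is convex because $\|y^*_\alpha/K\|\le 1$ and $f$ controls $F$ (so $(y^*_\alpha/K)\circ F+f$ is convex, scale by $K$). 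A supremum of convex functions is convex, hence $\Phi\circ F+Kf=\sup_\alpha(\ell_\alpha\circ F+Kf)$ is convex, provided it is finite and continuous, which follows from $\Phi\circ F$ being continuous ($\Phi$ and $F$ are continuous). Applying this with $\Phi=\psi$ (constant $L+M$) and with $\Phi=g$ (constant $M$) gives that $\psi\circ F+(L+M)f$ and $g\circ F+(L+M)f$ are both convex. Therefore
\[
z^*\circ(G\circ F)+2(L+M)f=\bigl[\psi\circ F+(L+M)f\bigr]+\bigl[(L+M)f-g\circ F\bigr]
\]
is a sum of two convex functions, hence convex, where the second bracket is convex because $g\circ F+(L+M)f$ is. Since this holds for every admissible $z^*$ with the fixed continuous convex control function $h:=2(L+M)f$ independent of $z^*$, the mapping $G\circ F$ is d.c. with control function $h$.

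The main obstacle, and the point requiring care, is the representation of a convex Lipschitz function as a supremum of affine minorants with uniformly bounded slopes, and verifying that the resulting supremum $\Phi\circ F+Kf$ is genuinely a (finite, continuous) convex control function rather than merely a convex function in a formal sense. In infinite dimensions one must ensure the affine minorants exist with slopes of norm at most the Lipschitz constant $K$; this is a standard consequence of the subdifferential of a $K$-Lipschitz convex function being contained in $K\cdot B_{Y^*}$ together with the fact that a continuous convex function equals the supremum of its affine minorants coming from subgradients at points of its domain. Continuity of $h=2(L+M)f$ and of $G\circ F$ is immediate from the hypotheses, so once the supremum representation is in hand the remaining steps are routine. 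I note that this is exactly the form in which the result appears in \cite{VeZa1} and \cite{VeZa2}, so the argument above reconstructs their proof; the uniform control function $h$ is what makes the conclusion usable in the later gluing arguments via Lemma~\ref{L:hartman}.
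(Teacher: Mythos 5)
Your key lemma is correct and is indeed the standard main ingredient: if $\Phi\colon B\to\R$ is convex and $K$-Lipschitz and $f$ controls $F$, then $\Phi\circ F+Kf$ is convex (to get the supremum representation with slopes in $K B_{Y^*}$ on a general convex $B$, first extend $\Phi$ to a convex $K$-Lipschitz function on all of $Y$, cf.\ Lemma~\ref{F:1}(c)). The genuine gap is in the final assembly. You assert that the second bracket $(L+M)f-g\circ F$ is convex ``because $g\circ F+(L+M)f$ is.'' That is a non sequitur: convexity of $u+v$, with $v$ convex, says nothing about convexity of $v-u$. In fact the bracket is typically \emph{not} convex, and your proposed control function $h=2(L+M)f$ is simply not a control function in general. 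Concrete counterexample: $X=Y=Z=\R$, $A=B=\R$, $F=\mathrm{id}$ with control function $f\equiv0$, and $G(y)=\min\{|y|,1\}$, which is $1$-Lipschitz and d.c.\ with the $2$-Lipschitz control function $g(y)=|y|+\max\{|y|-1,0\}$ (indeed $zG+g=(1+z)|y|+(1-z)\max\{|y|-1,0\}$ is convex for $|z|\le1$). Here your bracket equals $-g$, which is concave, and $h=2(L+M)f\equiv0$ fails to control $G\circ F=G$ since neither $G$ nor $-G$ is convex. This also shows that no multiple of $f$ alone can work: the concavity of $-g\circ F$ need not be dominated by $f$ at all.

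The repair is short and uses only your own lemma: the control function must contain the term $g\circ F$ itself, convexified by adding a multiple of $f$. Put $h:=g\circ F+(L+2M)f$. Then $h=\bigl[g\circ F+Mf\bigr]+(L+M)f$ is continuous and convex by your lemma, and for every $z^*\in Z^*$ with $\|z^*\|\le1$,
\[
z^*\circ(G\circ F)+h=(\psi-g)\circ F+g\circ F+(L+2M)f=\bigl[\psi\circ F+(L+M)f\bigr]+Mf
\]
is convex, again by your lemma applied to $\psi=z^*\circ G+g$, which is convex and $(L+M)$-Lipschitz. Since $h$ is independent of $z^*$, this proves $G\circ F$ is d.c.\ with control function $h$. (For the record: the paper itself imports Proposition~\ref{P:veza} from \cite{VeZa1}, \cite{VeZa2} without proof, so there is no in-paper argument to compare against; your decomposition-plus-lemma strategy is essentially the intended one, and only the choice of $h$ needs the correction above.)
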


\begin{lemma}[{\cite[Lemma~3.2(ii)]{VeZa2}}]\label{P:main}
Let $U,V,W$ be normed linear spaces, let $A\subset U$ be an open convex
set
and $B\subset V$ a convex set, and let $\Phi\colon A\to B$
 and $\Psi\colon B\to W$ be mappings.
Suppose that $\Phi$ is d.c.\ and
there exist sequences of convex sets $A_n \subset A$, $B_n \subset B$
such that
$\mathrm{int}\,A_n \neq \emptyset$,
$A_n\sipka A$,
$\Phi(A_n) \subset B_n$, and $\Psi|_{B_n}$ is Lipschitz and d.c.\ with a
Lipschitz control function.
 Then $\Psi\circ \Phi$ is d.c.\ on $A$.
 \end{lemma}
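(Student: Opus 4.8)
The plan is to localize. On a suitable exhaustion of $A$ by nice sets $D_n$ the restriction $(\Psi\circ\Phi)|_{D_n}$ will be d.c.\ with a well-behaved control function (the local d.c.-ness is, in spirit, the composition result Proposition~\ref{P:veza}), and then to glue the pieces by the Hartman-type Lemma~\ref{L:hartman}. The one thing Proposition~\ref{P:veza} does \emph{not} hand us is a \emph{bounded or Lipschitz} control on each piece, and that is exactly what Lemma~\ref{L:hartman} requires. Producing such a control is the heart of the argument; everything else is bookkeeping about convex sets.

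First I would fix a control function $f$ for $\Phi$ and arrange the exhausting sets. Apply Lemma~\ref{K} to $f$ to obtain bounded open convex $E_n\sipka A$ on which $f$ is Lipschitz. Since each $A_n$ has nonempty interior and $A_n\sipka A$, a segment argument (using $[a,b)\subset\operatorname{int}K$ whenever $a\in\operatorname{int}K$, $b\in K$, $K$ convex, together with the openness of $A$) shows $\operatorname{int}A_n\sipka A$. Hence the open convex sets $C_n:=(\operatorname{int}A_n)\cap E_n$ satisfy $C_n\sipka A$ (after discarding finitely many empty ones), and $f$ is Lipschitz on each $C_n$. Feeding $\{C_n\}$ into Lemma~\ref{L:dn} yields bounded open convex $D_n$ with $D_n\sipky A$ and $D_n\cc C_n$. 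By construction $D_n\subset\operatorname{int}A_n\subset A_n$, so $\Phi(D_n)\subset B_n$; moreover $f$ is Lipschitz on $D_n\subset E_n$; and $D_n\sipky A$ gives both $\operatorname{dist}(D_n,A\setminus D_{n+1})>0$ and $\bigcup_nD_n=A$, each $D_n$ being relatively open in $A$. Thus the $D_n$ meet every geometric hypothesis of Lemma~\ref{L:hartman}.

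Now fix $n$. Since $\Phi|_{D_n}$ is d.c.\ with the Lipschitz control $f|_{D_n}$ and $D_n$ is bounded, open and convex, Proposition~\ref{P:lip} makes $\Phi|_{D_n}$ Lipschitz. Let $g_n$ be a Lipschitz control for $\Psi|_{B_n}$, put $\ell_n:=\Lip(\Psi|_{B_n})$ and $L_n:=\ell_n+\Lip(g_n)$, and set $\gamma_n:=g_n\circ\Phi+L_n f$ on $D_n$. I claim $\gamma_n$ is a Lipschitz control for $\Psi\circ\Phi$ on $D_n$. It is Lipschitz because $g_n$ is Lipschitz on $B_n\supset\Phi(D_n)$, $\Phi|_{D_n}$ is Lipschitz, and $f$ is Lipschitz on $D_n$. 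To see that it controls $\Psi\circ\Phi$, take $z^*\in W^*$ with $\|z^*\|\le1$; then $z^*\circ\Psi+g_n$ is convex on $B_n$ and Lipschitz with constant at most $L_n$, and the elementary fact that for any convex $L$-Lipschitz $h$ and any mapping controlled by $f$ the function $h\circ\Phi+Lf$ is convex (immediate from the defining inequality $\|(1-t)\Phi(x)+t\Phi(y)-\Phi((1-t)x+ty)\|\le(1-t)f(x)+tf(y)-f((1-t)x+ty)$ of a control, combined with the Lipschitz and convexity estimates for $h$) yields that
\[
z^*\circ(\Psi\circ\Phi)+\gamma_n=(z^*\circ\Psi+g_n)\circ\Phi+L_n f
\]
is convex. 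As this holds for every such $z^*$, $\gamma_n$ controls $\Psi\circ\Phi$ on $D_n$. Applying Lemma~\ref{L:hartman} with $F=\Psi\circ\Phi$, the sets $D_n$ and the Lipschitz controls $\gamma_n$ then gives that $\Psi\circ\Phi$ is d.c.\ on $A$.

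I expect the genuine difficulty to be concentrated in the choice of $\gamma_n$. The naive control obtained by writing $z^*\circ\Psi=(z^*\circ\Psi+g_n)-g_n$ and treating $-g_n\circ\Phi$ separately forces one to dominate a term of the form $t(1-t)\|\Phi(x)-\Phi(y)\|$ by a convexity defect, which is impossible near the diagonal (convexity defects are of second order in the increment). The point is therefore to keep $g_n\circ\Phi$ \emph{inside} the control rather than subtracting it; this is what makes $\gamma_n$ simultaneously a control and Lipschitz. The remaining steps, namely the segment argument for $\operatorname{int}A_n\sipka A$ and the verification of the geometric hypotheses of Lemma~\ref{L:hartman}, are routine.
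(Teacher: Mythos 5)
This paper never actually proves Lemma~\ref{P:main}: it is imported verbatim from \cite[Lemma~3.2(ii)]{VeZa2}, so there is no internal proof to compare yours against; I can only judge your argument on its own terms, and it is correct. The exhaustion step works: the segment fact gives $\mathrm{int}\,A_n\nearrow A$, so $C_n:=(\mathrm{int}\,A_n)\cap E_n$ (with $E_n$ from Lemma~\ref{K}) can be fed into Lemma~\ref{L:dn}, and the resulting $D_n$ satisfy all geometric hypotheses of Lemma~\ref{L:hartman}; Proposition~\ref{P:lip} makes $\Phi|_{D_n}$ Lipschitz; and your key claim is sound: for $\|z^*\|\le1$ the function $h:=z^*\circ\Psi+g_n$ is convex and $L_n$-Lipschitz on $B_n$, and the norm form of the control inequality, $\|(1-t)\Phi(x)+t\Phi(y)-\Phi(z)\|\le(1-t)f(x)+tf(y)-f(z)$ for $z=(1-t)x+ty$ (equivalent to Definition~\ref{D:dc} via Hahn--Banach), gives convexity of $h\circ\Phi+L_nf=z^*\circ(\Psi\circ\Phi)+\gamma_n$; the case $z^*=0$ also shows $\gamma_n$ is itself convex, as a control function must be. Two remarks. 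First, you have in effect reconstructed the intended proof rather than found a different one: the formula $\gamma_n=g_n\circ\Phi+\bigl(\mathrm{Lip}(\Psi|_{B_n})+\mathrm{Lip}(g_n)\bigr)f$ is the classical composition control function going back to \cite{VeZa1} (it is how Proposition~\ref{P:veza} is proved there), and globalizing it by the Hartman-type gluing Lemma~\ref{L:hartman} is precisely the mechanism of \cite{VeZa2}. Second, a bookkeeping nit: after discarding the finitely many empty $C_n$'s you re-index, so in fact $D_n\cc C_{n+j}\subset\mathrm{int}\,A_{n+j}$ for some fixed shift $j$, whence $\Phi(D_n)\subset B_{n+j}$; you must then take $g_{n+j}$ and $L_{n+j}$ in the definition of $\gamma_n$ (or, cleaner, shift the sequences $\{A_n\}$, $\{B_n\}$ once at the outset). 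As literally written, ``$\Phi(D_n)\subset B_n$'' is off by this shift, though nothing in the argument breaks.
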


Let us recall that a normed linear space $X$ is said to have {\em
modulus of convexity of power type 2} if there exists $a>0$ such that $\delta_X(\epsilon)\ge
a\epsilon^2$ for each $\epsilon\in(0,2]$ (where $\delta_X$ denotes the
classical modulus of convexity of $X$; see e.g.\ \cite[p.409]{BeLin} for the
definition).

\begin{proposition}[{\cite[Corollary~3.9(a)]{VeZa2}}]\label{bilinear}
Let $Y,V,X,Z$ be normed linear spa\-ces and let both
$Y$ and $V$ admit renormings with modulus of
convexity of power type 2. Let $B\colon Y\times V\to Z$ be a
continuous bilinear mapping,
 $C\subset X$  an open convex set, and let
$F\colon C\to Y$ and $G\colon C\to V$ be d.c.\ mappings. Then the
mapping $B\circ(F,G)\colon x\mapsto B(F(x),G(x))$ is d.c.\ on $C$.
\end{proposition}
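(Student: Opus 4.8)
The plan is to reduce the bilinear composition to the composition results already available, namely Proposition~\ref{P:veza} and Lemma~\ref{P:main}. The natural idea is to view the map $x\mapsto B(F(x),G(x))$ as the composition of the d.c.\ mapping $(F,G)\colon C\to Y\times V$ with the bilinear map $B\colon Y\times V\to Z$. By Remark~\ref{R:dc}(a) and the observation that a product of d.c.\ mappings into the two factors is d.c.\ into the product (with a common control function obtained by adding the two control functions), the mapping $(F,G)$ is d.c.\ on $C$. So the whole problem reduces to understanding when a bilinear map can be post-composed with a d.c.\ mapping and stay d.c.

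The difficulty is that $B$ itself is \emph{not} globally d.c.\ with a Lipschitz control function: a bilinear map is unbounded and has quadratic growth, so Proposition~\ref{P:veza} does not apply directly. The remedy is to localize using Lemma~\ref{P:main}. First I would exhaust $C$ by convex sets $A_n\sipka C$ with nonempty interior on which $F$ and $G$ are bounded; such sets exist by Lemma~\ref{K} applied to control functions of $F$ and $G$ (intersecting the two resulting exhaustions). On such an $A_n$, the image $(F,G)(A_n)$ lands in a bounded convex set $B_n\subset Y\times V$. The key point is then to check that the restriction of $B$ to a bounded convex subset of $Y\times V$ is Lipschitz and d.c.\ with a Lipschitz control function. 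Lipschitzness on bounded sets is immediate from bilinearity and continuity of $B$. The genuinely substantive step is producing a Lipschitz control function for $B$ on bounded sets.

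This is exactly where the power-type-2 hypothesis on $Y$ and $V$ enters, and I expect it to be the main obstacle. After renorming $Y$ and $V$ to have modulus of convexity of power type $2$ (legitimate by Remark~\ref{R:dc}(b), since delta-convexity is norm-independent), the squared norms $y\mapsto\|y\|_Y^2$ and $v\mapsto\|v\|_V^2$ are uniformly convex of power type $2$, hence strongly convex, and their gradients are Lipschitz on bounded sets. The standard control-function computation for a bilinear form shows that a suitable multiple of $\|y\|_Y^2+\|v\|_V^2$ controls $B$: concretely, for unit functionals $z^*$, the function $z^*\circ B + c(\|y\|_Y^2+\|v\|_V^2)$ should be convex, because the mixed second-order terms coming from the bilinear part are dominated by the strong convexity supplied by the power-type-2 squared norms. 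On a bounded set this control function is Lipschitz (its gradient is bounded there). With $B|_{B_n}$ thus Lipschitz and d.c.\ with a Lipschitz control function, Lemma~\ref{P:main} applies with $\Phi=(F,G)$ and $\Psi=B$, yielding that $B\circ(F,G)$ is d.c.\ on $C$.

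The only care needed is to verify the precise hypotheses of Lemma~\ref{P:main}: that the $A_n$ are convex with nonempty interior and satisfy $A_n\sipka C$, that $(F,G)(A_n)\subset B_n$ with $B_n$ a convex set on which $\Psi$ has the required properties, and that the renorming does not disturb openness or convexity of $C$ (it does not, as the renorming is on $Y$ and $V$, not on $X$). I would also remark that the constant $c$ and the Lipschitz constants depend on $n$ only through the bound on $B_n$, which is all that Lemma~\ref{P:main} requires.
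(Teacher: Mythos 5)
The paper itself contains no proof of this proposition: it is quoted from \cite[Corollary~3.9(a)]{VeZa2}, so there is no in-paper argument to compare yours against. That said, your reconstruction is sound in outline and runs on exactly the machinery this paper imports from \cite{VeZa2} (exhaust $C$, then apply Lemma~\ref{P:main} with $\Phi=(F,G)$, $\Psi=B$), which is surely close to the intended proof. Two points need repair or emphasis. (i) Lemma~\ref{K} applied to control functions of $F$ and $G$ gives bounded open convex sets $A_n\sipka C$ on which those \emph{control functions} are Lipschitz; to get what you actually use --- that $F$ and $G$ are bounded on $A_n$, so that $(F,G)(A_n)$ lies in a bounded convex set $B_n$ --- you must additionally invoke Proposition~\ref{P:lip}, which upgrades a Lipschitz control function on a bounded open convex set to Lipschitzness (hence boundedness) of the mapping itself; you never cite it, and without it the inclusion $\Phi(A_n)\subset B_n$ is unjustified. (ii) The substantive step is, as you correctly isolate, that after renorming some multiple $K\bigl(\|y\|_Y^2+\|v\|_V^2\bigr)$ controls $B$. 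This rests on the classical but nontrivial fact that a norm with modulus of convexity of power type $2$ has a $2$-uniformly convex square: there is $c>0$ such that $\bigl\|\tfrac{y_1+y_2}{2}\bigr\|^2\le\tfrac12\bigl(\|y_1\|^2+\|y_2\|^2\bigr)-c\|y_1-y_2\|^2$ for all $y_1,y_2$. You assert this (``hence strongly convex'') rather than prove it, but granted it, your computation does close: for $\|z^*\|\le1$ the midpoint convexity defect of $z^*\circ B$ equals $\tfrac14\,z^*B(y_1-y_2,v_1-v_2)\ge-\tfrac{\|B\|}{4}\|y_1-y_2\|\,\|v_1-v_2\|$, and this is absorbed, via the AM--GM inequality, by the strong-convexity surplus $K\bigl(c_Y\|y_1-y_2\|^2+c_V\|v_1-v_2\|^2\bigr)$ as soon as $K\ge\|B\|/\bigl(8\sqrt{c_Y c_V}\bigr)$, uniformly in $z^*$; continuity then yields convexity, so the control function works globally, not just on $B_n$. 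Finally, Lipschitzness of this control function on bounded sets is elementary, since $\bigl|\,\|y_1\|^2-\|y_2\|^2\bigr|\le(\|y_1\|+\|y_2\|)\|y_1-y_2\|$; your appeal to ``Lipschitz gradients'' is both unnecessary and unavailable, as these norms need not be differentiable. With (i) and (ii) in place, Lemma~\ref{P:main} applies exactly as you describe and the proposition follows.
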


%%%%%%%%%%%%%%%%%%%%%%%%%%%%%%%%%%%%%%%%%%%%%%%%%%%%%%%%%%%%%%%%%%%%%%%%%%%

\section{Extensions of d.c.\ mappings}

Let $X,Y$ be normed linear spaces, $C\subset X$ be a convex set, and
$F\colon C\to Y$ be a d.c.\ mapping.
In the present section, we are interested in existence of a d.c.\
extension of $F$ to the whole $X$ or at least to the closure of $C$. Let
us start with a simple observation.

\begin{observation}\label{O:exten}
Let $X,Y,C,F$ be as in the beginning of this section, and
$f\colon C\to\R$ a control function of $F$.
\begin{enumerate}
\item[(a)] If $Y$ is finite-dimensional, and both $F,f$ are Lipschitz on
$C$, then $F$ admits a d.c.\ extension to $X$.
\item[(b)] If both $F,f$ admit continuous extensions $\tilde{F},\tilde{f}$
to a convex set $D$ such that $C\subset D\subset\overline{C}$,
then $\tilde F$ is d.c.\ with the control function $\tilde f$.
\end{enumerate}
\end{observation}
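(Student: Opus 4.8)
The plan is to prove the two parts separately, with part (b) being the more straightforward one.

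\medskip

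\emph{Part (b).} The idea is to verify the defining property of delta-convexity directly from Definition~\ref{D:dc} by a continuity/density argument. We are given that $\tilde f$ is a continuous extension of the control function $f$, and $\tilde F$ is a continuous extension of $F$, both to a convex set $D$ with $C\subset D\subset\overline C$. First I would note that $C$ is dense in $D$: indeed, $C\subset D\subset\overline C\subset\overline{C}$, so $\overline C=\overline D$ and thus $C$ is dense in $D$. Now fix any $y^*\in Y^*$ with $\|y^*\|\le1$. On $C$ the function $y^*\circ F+f$ is convex by hypothesis. Since $y^*\circ\tilde F+\tilde f$ is a continuous extension of $y^*\circ F+f$ to $D$, and convexity of a continuous function persists under continuous extension to points of the closure (convexity is a closed condition: the inequality $g((1-t)u+tv)\le(1-t)g(u)+tg(v)$ for $u,v\in C$, $t\in[0,1]$ extends to $u,v\in D$ by density and continuity), the extended function $y^*\circ\tilde F+\tilde f$ is convex on $D$. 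As this holds for every admissible $y^*$, and $\tilde f$ is continuous, $\tilde f$ controls $\tilde F$, so $\tilde F$ is d.c. This part has no serious obstacle; the only care needed is the density observation and the standard fact that a continuous extension of a convex function to the closure of its domain remains convex.

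\medskip

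\emph{Part (a).} Here the strategy is to use the finite-dimensionality of $Y$ to pass to coordinates, then extend each scalar component as a difference of convex functions and apply Lemma~\ref{F:1}(c). Write $Y=\R^m$ (using Remark~\ref{R:dc}(b) to fix any norm, since delta-convexity is norm-independent), so $F=(F_1,\dots,F_m)$ with each $F_k$ a d.c.\ function on $C$ by Remark~\ref{R:dc}(a). Since $f$ controls $F$, for each $k$ both $f+F_k$ and $f-F_k$ are convex (taking $y^*=\pm e_k^*$, whose dual norms are $1$ in an appropriate norm on $Y$), and each is Lipschitz on $C$ because $F$ and $f$ are Lipschitz. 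Thus $F_k=\tfrac12(f+F_k)-\tfrac12(f-F_k)$ exhibits $F_k$ as a difference of two Lipschitz convex functions on $C$. By Lemma~\ref{F:1}(c), each of these Lipschitz convex summands admits a Lipschitz convex extension to all of $X$; their difference is then a d.c.\ extension $\tilde F_k$ of $F_k$ to $X$. Assembling $\tilde F:=(\tilde F_1,\dots,\tilde F_m)$ and invoking Remark~\ref{R:dc}(a) once more shows $\tilde F$ is d.c.\ on $X$ and extends $F$.

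\medskip

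The main obstacle, and the reason finite-dimensionality is needed in (a), is precisely the reduction to scalar components: the coordinatewise argument via Remark~\ref{R:dc}(a) and the application of Lemma~\ref{F:1}(c) to individual convex functions rely on $Y$ having finitely many coordinates with controllable dual functionals $e_k^*$. In infinite dimensions one cannot in general extend a vector-valued Lipschitz convex ``summand'' this way, so the scalar reduction is the crux. I would make sure the norm chosen on $\R^m$ makes $\|e_k^*\|\le1$ (e.g.\ the $\ell^\infty$ norm on $\R^m$, whose dual is $\ell^1$ with $\|e_k^*\|_1=1$), so that the convexity of $f\pm F_k$ follows directly from the definition of control function.
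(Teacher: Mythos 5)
Your proof is correct and takes essentially the same route as the paper: for (a) you reduce to scalar components via Remark~\ref{R:dc} and extend a decomposition into Lipschitz convex functions using Lemma~\ref{F:1}(c) (the paper writes the scalar case as $F=(F+f)-f$ rather than your symmetric splitting $\tfrac12(f+F_k)-\tfrac12(f-F_k)$, an immaterial difference), and for (b) you simply spell out the ``simple limit argument'' that the paper invokes without detail. No gaps.
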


\begin{proof}
 By Remark~\ref{R:dc}, it suffices to prove {\it(a)} for $Y=\R$. In this
case, $F=(F+f)-f$ is a difference of two Lipschitz convex functions on
$C$. By Lemma~\ref{F:1}(c), $F$ can be extended to a difference of two
Lipschitz convex functions on $X$. The assertion
{\it(b)} follows by a simple limit argument.
\end{proof}

\begin{proposition}\label{L:uzaver}
Let $X$ be a normed linear space, $Y$ a Banach space, $C\subset X$
a convex set with a nonempty interior and $F\colon C\to Y$ a d.c.\ mapping.
Suppose there exists a nondecreasing sequence $\{A_n\}$ of open convex
sets in $X$ such that $\overline{C}\subset \bigcup A_n$ and, for each
$n$, $F|_{(\mathrm{int}\,C)\cap A_n}$ has a Lipschitz control function.
Then $F$ admits a
d.c.\ extension to $\overline{C}$.
\end{proposition}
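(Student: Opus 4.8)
The plan is to first extend $F$ to a continuous mapping $\tilde F$ on $\overline C$, and then to recognize $\tilde F$ as d.c.\ by gluing local d.c.\ data via Lemma~\ref{L:hartman}. Write $\Omega:=\mathrm{int}\,C$ and $W:=\bigcup_n A_n$; note that $W$ is open and convex, that $\overline C=\overline\Omega\subset W$, and that $A_n\sipka W$ (after discarding the finitely many empty initial terms).

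\textbf{Continuous extension.} I would first show that $F$ is locally Lipschitz near $\overline C\setminus\Omega$. Fix $\bar x\in\overline C$; then $\bar x\in A_N$ for some $N$, so there is $r>0$ with $U(\bar x,r)\subset A_N$. The set $V:=U(\bar x,r)\cap\Omega$ is bounded, open, convex, and nonempty (since $\bar x\in\overline\Omega$), and $F|_V$ is d.c.\ with a Lipschitz control function, namely the restriction of the given control function on $\Omega\cap A_N$. By Proposition~\ref{P:lip}, $F|_V$ is Lipschitz; since $Y$ is complete, $F|_V$ extends to a Lipschitz map on $\overline V\ni\bar x$. Setting $\tilde F(\bar x):=\lim_{\Omega\ni x\to\bar x}F(x)$ thus yields a well-defined mapping on $\overline C$ that agrees with $F$ on $C$ (by continuity of $F$) and is locally Lipschitz, hence continuous, on $\overline C$.

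\textbf{Control functions on the pieces.} Next, for each $n$ put $P_n:=\overline C\cap A_n$, a convex set that is relatively open in $\overline C$. Since $A_n$ is open and $\overline C=\overline\Omega$, one checks that $\Omega\cap A_n\subset P_n\subset\overline{\Omega\cap A_n}$, so $P_n$ lies between $\Omega\cap A_n$ and its closure. The mapping $F|_{\Omega\cap A_n}$ is d.c.\ with a Lipschitz control function $g_n$, and both $F|_{\Omega\cap A_n}$ and $g_n$ extend continuously to $P_n$ (the former to $\tilde F|_{P_n}$ by the previous step, the latter to a Lipschitz $\tilde g_n$ by uniform continuity). Hence, by Observation~\ref{O:exten}(b), $\tilde F|_{P_n}$ is d.c.\ with the Lipschitz control function $\tilde g_n$.

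\textbf{Gluing.} Finally, applying Lemma~\ref{L:dn} to $A_n\sipka W$ produces bounded open convex sets $D_n$ with $D_n\sipky W$ and $D_n\cc A_n$. Put $\mathcal D_n:=\overline C\cap D_n$. These are convex and relatively open in $\overline C$, satisfy $\mathcal D_n\sipka\overline C$ (since $\bigcup_n D_n=W\supset\overline C$), and from $D_n\cc D_{n+1}$ one obtains $\dist(\mathcal D_n,\overline C\setminus\mathcal D_{n+1})>0$; moreover $\mathcal D_n\subset P_n$, so $\tilde F|_{\mathcal D_n}$ is d.c.\ with the Lipschitz control function $\tilde g_n|_{\mathcal D_n}$. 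Thus all hypotheses of Lemma~\ref{L:hartman} (with $\overline C$ in place of $C$) are satisfied, and I conclude that $\tilde F$ is d.c.\ on $\overline C$, as desired. I expect the only delicate point to be the continuous-extension step: converting the hypothesis ``Lipschitz control function on $\Omega\cap A_n$'' into genuine Lipschitz behaviour of $F$ near the boundary, which is precisely where the localization to bounded sets, together with Proposition~\ref{P:lip} and the completeness of $Y$, is needed.
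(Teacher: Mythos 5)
Your proof is correct and follows essentially the same route as the paper's: Proposition~\ref{P:lip} plus completeness of $Y$ to extend $F$ continuously to $\overline{C}$, Observation~\ref{O:exten}(b) to transfer the Lipschitz control functions to the sets $\overline{C}\cap A_n$, and Lemma~\ref{L:dn} together with Lemma~\ref{L:hartman} to glue the pieces. The only (immaterial) difference is one of ordering: the paper invokes Lemma~\ref{L:dn} at the outset to make the sets $A_n$ bounded, so that the Lipschitz extension step is global on each piece, whereas you localize that step to small balls and apply Lemma~\ref{L:dn} only at the end to obtain the positive-distance condition.
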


\begin{proof}
Since $A_n\sipka A:=\bigcup_k A_k$,
Lemma~\ref{L:dn} allows us to suppose that the sets $A_n$ are also
bounded and satisfy $A_n\sipky A$. By Proposition~\ref{P:lip},
$F|_{(\mathrm{int}\,C)\cap A_n}$ is Lipschitz for each $n$. Consequently,
since $\overline{C}\cap A_n\subset\overline{(\mathrm{int}\,C)\cap A_n}\;$,
$F|_{(\mathrm{int}\,C)\cap A_n}$ has
a unique Lipschitz extension $F_n^*\colon \overline{C}\cap A_n\to Y$.
Since, for $n_2>n_1$, $F^*_{n_2}$ obviously extends $F^*_{n_1}$, there
exists a unique continuous $F^*\colon \overline{C}\to Y$ which
extends each $F^*_n$. Since, for each $n$,
any Lipschitz control function for $F|_{(\mathrm{int}\,C)\cap A_n}$
has a Lipschitz extension to $D_n:=\overline{C}\cap A_n$,
Observation~\ref{O:exten}(b) gives that $F^*|_{D_n}$ has a Lipschitz control
function. Moreover,
$\bigcup D_n=\overline{C}$ and
$\mathrm{dist}(D_n,\overline{C}\setminus D_{n+1})=
\mathrm{dist}(D_n,\overline{C}\setminus A_{n+1})\ge
\mathrm{dist}(A_n,X\setminus A_{n+1})>0$ for each $n$. Applying
Lemma~\ref{L:hartman} with $D:=\overline{C}$, we obtain that $F^*$ is
d.c.\ on $\overline{C}$.
\end{proof}

Now we will prove the main result of the present section.
For the definition of modulus of convexity of power type 2
see the text before Proposition \ref{bilinear}.

\begin{theorem}\label{T:rozsir}
Let $X,Y$ be normed linear spaces, $C\subset X$ a convex set with a
nonempty interior and $F\colon C\to Y$  a d.c.\ mapping. Let
$A\supset C$ be an open convex set in $X$.
Suppose that
$X$ admits a renorming with modulus of convexity of
power type 2, and either $C$ is closed or $Y$ is a Banach space.
Then the following assertions are equivalent.
\begin{enumerate}
\item[(i)] $F$ admits a d.c.\ extension $\widehat{F}\colon A\to Y$.
\item[(ii)] Some control function $f$ of $F$ admits a continuous convex extension
$\widehat{f}\colon A\to \R$.
\item[(iii)] There exists a nondecreasing sequence $\{D_n\}$ of open convex sets
such that $A=\bigcup D_n$ and,
for each $n$,
$(\mathrm{int}\,C)\cap D_n\ne\emptyset$ and
the restriction of $F$ to
$(\mathrm{int}\,C)\cap D_n$ has a Lipschitz control function.
\end{enumerate}
\end{theorem}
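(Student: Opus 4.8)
The plan is to establish the cycle (i)$\Rightarrow$(ii)$\Rightarrow$(iii)$\Rightarrow$(i). The first two implications are soft. For (i)$\Rightarrow$(ii): a d.c.\ extension $\widehat F\colon A\to Y$ comes, by Definition~\ref{D:dc}, with a continuous convex control function $g\colon A\to\R$; restricting the defining inequalities $y^*\circ\widehat F+g$ to $C$ shows that $f:=g|_C$ controls $F$, and $g$ is then the sought continuous convex extension of $f$. For (ii)$\Rightarrow$(iii): I would apply Lemma~\ref{K} to the continuous convex function $\widehat f$ on the open convex set $A$ to obtain bounded open convex sets $D_n\sipka A$ on which $\widehat f$ is Lipschitz; since $\mathrm{int}\,C$ is open and nonempty and the $D_n$ increase to $A$, after discarding finitely many initial terms (which preserves the union) each $(\mathrm{int}\,C)\cap D_n$ is nonempty, and there $\widehat f=f$ restricts to a Lipschitz control function for $F$.

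The substance is (iii)$\Rightarrow$(i). First, using Lemma~\ref{L:dn} with $A$ in place of $C$ and the given sets $D_n$ as the $C_n$, I may assume the $D_n$ are bounded and $D_n\sipky A$; the local Lipschitz control functions survive restriction, and Proposition~\ref{P:lip} then makes each $F|_{(\mathrm{int}\,C)\cap D_n}$ Lipschitz. By Remark~\ref{R:dc}(b) I renorm $X$ so that it has modulus of convexity of power type $2$. The engine of the argument is a \emph{d.c.\ retraction}: the nearest-point (metric) projection $P$ onto $\overline{C}$, which under the power-type-$2$ hypothesis should be d.c.\ and, on each bounded set, Lipschitz with a Lipschitz control function. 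Next I would produce a d.c.\ extension $F^*$ of $F$ to $\overline C$ that is, on bounded subsets, Lipschitz and d.c.\ with a Lipschitz control function: when $C$ is closed this is simply $F$ (with $\overline C=C$), and when $Y$ is a Banach space it is furnished by Proposition~\ref{L:uzaver} together with Observation~\ref{O:exten}(b). Finally I set $\widehat F:=F^*\circ P$. Since $P$ fixes $\overline C$, hence $C$, the map $\widehat F$ extends $F$; and taking $A_n:=D_n$ and $B_n\supset P(D_n)$ in Lemma~\ref{P:main} (using that $P$ sends bounded sets to bounded sets, on which $F^*$ is Lipschitz and d.c.\ with a Lipschitz control function) yields that $\widehat F=F^*\circ P$ is d.c.\ on $A$, which is (i).

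The main obstacle is precisely this d.c.\ retraction and the bookkeeping around it. Establishing that the metric projection onto a closed convex set is d.c.\ with the stated local Lipschitz-control behaviour is the geometric heart of the matter, and is exactly where the power-type-$2$ modulus is indispensable; here one must also be careful about existence and single-valuedness of the projection when $X$ is not complete, which is where passing to a completion, or exploiting the case hypotheses, would enter. The second delicate point is verifying the hypotheses of Lemma~\ref{P:main}: one must check that each $P(D_n)$ lands in a set on which $F^*$ is genuinely Lipschitz, d.c., and carries a Lipschitz control function. This is what forces the two alternative assumptions ``$C$ closed'' and ``$Y$ a Banach space'' — the former keeping the projection inside $C$, where $F$ already lives, and the latter guaranteeing that the extension $F^*$ to $\overline C$ exists in $Y$.
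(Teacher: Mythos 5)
Your treatment of (i)$\Rightarrow$(ii) and (ii)$\Rightarrow$(iii) matches the paper, and the overall architecture you propose for (iii)$\Rightarrow$(i) --- extend $F$ to $F^*$ on $\overline{C}$ by Lipschitz extensions (using Proposition~\ref{P:lip}, Proposition~\ref{L:uzaver}/Observation~\ref{O:exten}(b)), then write $\widehat F=F^*\circ P$ for a d.c.\ projection $P$ onto $\overline C$ and invoke Lemma~\ref{P:main} --- is exactly the paper's skeleton. But the engine of your argument, the claim that the \emph{metric} (nearest-point) projection onto $\overline C$ is d.c.\ and, on bounded sets, Lipschitz with a Lipschitz control function, is precisely what you never prove, and it cannot be obtained from the tools at hand. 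First, $X$ is only a normed linear space, so nearest points in $\overline C$ need not exist at all (passing to the completion does not help: the nearest point computed there need not lie in $X$). Second, even in complete spaces with modulus of convexity of power type 2, the metric projection onto a closed convex set is in general only H\"older continuous on bounded sets, not Lipschitz --- $L_p$ with $1<p<2$ already exhibits this --- and its delta-convexity is a genuinely difficult question even in Hilbert space; nothing in Proposition~\ref{P:veza}, Proposition~\ref{bilinear}, or elsewhere in the paper yields it. So the ``geometric heart'' you flag as an obstacle is a real gap, not a deferred technicality.

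There is a second, independent problem: even granting a well-behaved projection, Lemma~\ref{P:main} requires $\Phi(A_n)\subset B_n$ where $F^*|_{B_n}$ is Lipschitz and d.c.\ with a Lipschitz control function. Hypothesis (iii) supplies this structure only on the sets $(\mathrm{int}\,C)\cap D_n$ (hence, after extension, on $\overline C\cap D_n$); it does \emph{not} give it on arbitrary bounded subsets of $\overline C$, since an increasing sequence of open convex sets with union $A$ need not absorb bounded subsets of $\overline C$. The metric projection only gives $P(D_n)\subset\overline C\cap(\text{some ball})$, which is not one of the good sets. The paper's construction solves both problems at once: after translating so that $0\in(\mathrm{int}\,C)\cap D_1$ and setting $A_n:=D_n\cap U(0,n)$, it uses the \emph{radial} projection $P(x)=x$ on $\overline C$ and $P(x)=x/\mu(x)$ outside, where $\mu$ is the Minkowski functional of $C$. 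Its delta-convexity follows from the composition results: $x\mapsto 1/\max\{1,\mu(x)\}$ is d.c.\ by Proposition~\ref{P:veza}, and $P(x)=B\bigl(1/\max\{1,\mu(x)\},x\bigr)$ with $B(t,x)=tx$ bilinear is d.c.\ by Proposition~\ref{bilinear} --- this is exactly where the power-type-2 renorming enters. Moreover, since $P$ moves each point along the ray toward $0\in A_1$, one gets $P(A_n)\subset\overline C\cap A_n=B_n$, which is precisely the compatibility that Lemma~\ref{P:main} needs and that the metric projection fails to provide.
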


\begin{proof}
The implication $(i)\Rightarrow(ii)$ is trivial, while
$(ii)\Rightarrow(iii)$ follows immediately from Lemma~\ref{K} applied to $\hat f$.

Let us prove $(iii)\Rightarrow(i)$.
By translation we can suppose that $0\in (\mathrm{int}\,C)\cap D_1$. The
sets $A_n:=D_n\cap U(0,n)$ ($n\in\N$) form a sequnce of
bounded open convex sets such that
$A_n\sipka A$, $0\in (\mathrm{int}\,C)\cap A_1$ and, for each $n$,
\begin{equation}\label{contr}
\text{
$F|_{(\mathrm{int}\,C)\cap A_n}$ has a Lipschitz control function $f_n$.
}
\end{equation}
First we will extend $F$ to a mapping
$F^*\colon \overline{C}\cap A\to Y$. If $C$ is closed, then
$\overline{C}\cap A=C$; so we put $F^*=F$. If $C$ is not closed,
$Y$ is a Banach space by the assumptions. Proposition~\ref{P:lip} and
\eqref{contr} imply that $F$ is Lipschitz on
$(\mathrm{int}\,C)\cap A_n$.
Note that $\overline{C}\cap A_n\subset\overline{(\mathrm{int}\,C)\cap A_n}\,$;
thus $F|_{(\mathrm{int}\,C)\cap A_n}$ has
a unique Lipschitz extension $F_n^*\colon \overline{C}\cap A_n\to Y$.
Since, for $n_2>n_1$, $F^*_{n_2}$ obviously extends $F^*_{n_1}$, there
exists a unique continuous $F^*\colon \overline{C}\cap A\to Y$ which
extends each $F^*_n$.

In both cases ($C$ closed or not), $f_n$ has a Lipschitz extension to
$B_n:=\overline{C}\cap A_n$. By Observation~\ref{O:exten}(b),
\begin{equation}\label{contr2}
\text{
$F^*|_{B_n}$ is Lipschitz and d.c.\ with a Lipschitz control function.
}
\end{equation}

Denote by $\mu$ the Minkowski functional of $C$,
i.e.
\[
\mu(x)=\inf\{t>0: x\in tC\}.
\]
It is well known that $\mu$ is a Lipschitz convex function on $X$
(recall that $0\in\mathrm{int}\,C$), and
$\mu(x)\le1$ if{f} $x\in \overline{C}$. Consider the ``radial projection'' $P$ onto
$\overline{C}$, given by
\[
P(x)=\begin{cases}
x &\text{if $x\in \overline{C}$;}\\
\frac{x}{\mu(x)} &\text{if $x\in X\setminus \overline{C}$.}
\end{cases}
\]
The function $x\mapsto\max\{1,\mu(x)\}$ is convex and Lipschitz, and its
values belong to $[1,\infty)$. The function $t\mapsto\frac{1}{t}$ is
convex and Lipschitz on $[1,\infty)$; consequently, by
Proposition~\ref{P:veza}, the composed function $x\mapsto\frac{1}{\max\{1,\mu(x)\}}$
is d.c.\ on $X$. Moreover, the mapping $B\colon\R\times X\to X$, given
by
\[
B(t,x)=tx\,,
\]
is a continuous bilinear mapping. Since
$P(x)=B\left(\frac{1}{\max\{1,\mu(x)\}},x\right)$, $x\in X$,
 Proposition~\ref{bilinear} implies
that $P$ is d.c.\ on $X$.

Let us show that $\hat{F}:=F^*\circ(P|_A)$ is a d.c.\ extension of $F$
to $A$. The fact that $\hat{F}$ extends $F$ is obvious. To prove that
$\hat{F}$ is d.c., it is sufficient to apply Lemma~\ref{P:main} with
$B:=\overline{C}\cap A$, $\Phi:=P|_A$, $\Psi:=F^*$ (and $A,A_n,B_n$ as above).
Indeed, the assumptions of that lemma are satisfied since
$\Phi(A_n)=P(A_n)\subset \overline{C}\cap A_n=B_n$
(note that $P(A_n)\subset A_n$ because $0\in A_1$) and \eqref{contr2}
holds.
\end{proof}

\begin{remark}
\begin{enumerate}
\item[(a)]
We do not know whether the renorming assumption on $X$ in
Theorem~\ref{T:rozsir} can be omitted or essentially weakened.
\item[(b)]
The condition (ii) in Theorem~\ref{T:rozsir} can be substituted by the
following formally weaker condition:
\begin{enumerate}
\item[(ii')] {\em some control function of $F$ can be extended to a
d.c.\ function on $A$.}
\end{enumerate}
Indeed, if $f_1$ and $f_2$ are continuous convex functions on $A$ such
that $f_1-f_2$ controls $F$ on $C$, then also the
sum $f_1 +f_2$ controls $F$ on $C$.
\end{enumerate}
\end{remark}

\begin{corollary}\label{C:rozsir}
Let $X,Y$ be normed linear spaces, $C\subset X$ be a convex set with a
nonempty interior, and $F\colon C\to Y$ be a
d.c.\ mapping.
Supose that,
the restriction of $F$ to each bounded open convex subset of $C$ has a
Lipschitz control function.
\begin{enumerate}
\item[(a)] If $Y$ is a Banach space, then
           $F$ admits a d.c.\ extension to $\overline{C}$.
\item[(b)] If $X$ admits a renorming with modulus of convexity of
     power type 2, and either $C$ is closed or $Y$ is a Banach space, then
     $F$ admits a d.c.\ extension to the whole $X$.
\end{enumerate}
\end{corollary}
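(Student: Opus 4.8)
The plan is to deduce both parts from the results already established, by exhibiting a single simple exhausting sequence of balls and checking that it realizes the hypotheses of Proposition~\ref{L:uzaver} (for (a)) and of condition (iii) in Theorem~\ref{T:rozsir} (for (b)).

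First I would fix $c_0\in\mathrm{int}\,C$ and set $A_n:=U(c_0,n)$ for $n\in\N$. Then $\{A_n\}$ is a nondecreasing sequence of bounded open convex sets with $\bigcup_n A_n=X$, and $c_0\in A_n\cap\mathrm{int}\,C$ for every $n$, so in particular $(\mathrm{int}\,C)\cap A_n\neq\emptyset$. The key elementary observation is that, for each $n$, the set $(\mathrm{int}\,C)\cap A_n$ is a \emph{bounded open convex subset of $C$}: it is open and convex as the intersection of the open convex set $\mathrm{int}\,C$ with the open ball $A_n$, it is contained in $C$, and it is bounded because it is contained in $A_n$. Hence the standing hypothesis applies and provides, for each $n$, a Lipschitz control function for $F|_{(\mathrm{int}\,C)\cap A_n}$.

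For (a), the sequence $\{A_n\}$ is exactly of the form required by Proposition~\ref{L:uzaver}: it is nondecreasing, consists of open convex sets, satisfies $\overline{C}\subset X=\bigcup_n A_n$, and $F|_{(\mathrm{int}\,C)\cap A_n}$ has a Lipschitz control function for each $n$. Since $Y$ is a Banach space by assumption, Proposition~\ref{L:uzaver} yields a d.c.\ extension of $F$ to $\overline{C}$. For (b), I would apply Theorem~\ref{T:rozsir} with $A:=X$ and $D_n:=A_n$. The standing assumptions of that theorem are met (namely, $X$ admits the required renorming, and $C$ is closed or $Y$ is Banach), and the sequence $\{D_n\}$ verifies condition (iii): $X=\bigcup_n D_n$, each $(\mathrm{int}\,C)\cap D_n$ is nonempty, and $F|_{(\mathrm{int}\,C)\cap D_n}$ has a Lipschitz control function. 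The implication $(iii)\Rightarrow(i)$ then produces a d.c.\ extension $\widehat{F}\colon X\to Y$.

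Since the substantive analytic work (the radial-projection composition argument and the gluing via Lemma~\ref{L:hartman}) is already carried out inside Theorem~\ref{T:rozsir} and Proposition~\ref{L:uzaver}, there is no genuine obstacle remaining: the only thing to verify is the openness, convexity, and boundedness of $(\mathrm{int}\,C)\cap A_n$, which is immediate. The sole point meriting a little care is that the hypothesis is phrased for bounded open convex subsets of $C$, so one must use $(\mathrm{int}\,C)\cap A_n$ (rather than $C\cap A_n$, which need not be open in $X$) when invoking it.
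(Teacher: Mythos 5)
Your proof is correct and takes essentially the same route as the paper, which simply sets $A_n:=U(0,n)$ and invokes Proposition~\ref{L:uzaver} for (a) and Theorem~\ref{T:rozsir} for (b). Your only refinement is centering the balls at a point $c_0\in\mathrm{int}\,C$, which guarantees $(\mathrm{int}\,C)\cap A_n\neq\emptyset$ for \emph{every} $n$ as condition (iii) of Theorem~\ref{T:rozsir} literally demands (with the paper's choice $U(0,n)$ this may fail for small $n$ and would require a trivial re-indexing).
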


\begin{proof}
Consider the sets $A_n:=U(0,n)$ ($n\in\N$) and apply
Proposition~\ref{L:uzaver} to get (a), and Theorem~\ref{T:rozsir} to get
(b).
\end{proof}

\begin{corollary}\label{elpecka}
Let $X$ be a (subspace of some) $L_p(\mu)$ space with $1<p\le2$. Let
$C\subset X$ be a convex set with a nonempty interior.
\begin{enumerate}
\item[(a)] Each continuous convex function on $C$, which is Lipschitz on every
bounded subset of $\mathrm{int}\,C$, admits a d.c.\ extension to the whole $X$.
\item[(b)] Each Banach space-valued ${\cal C}^{1,1}$ mapping on $C$
admits a d.c.\ extension to the whole $X$.
\end{enumerate}
\end{corollary}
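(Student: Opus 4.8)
The plan is to obtain both (a) and (b) as applications of Corollary~\ref{C:rozsir}(b). That corollary has three ingredients to verify: that $X$ admits a renorming with modulus of convexity of power type $2$; that the relevant mapping $F$ is d.c.\ on $C$; and that the restriction of $F$ to every bounded open convex subset of $C$ carries a Lipschitz control function. The renorming ingredient is common to both parts and needs no renorming at all: it is classical (see \cite{BeLin}, via Clarkson's/Hanner's inequalities) that $L_p(\mu)$ with $1<p\le2$ has modulus of convexity of power type $2$ in its natural norm, and a subspace $X$ inherits this since $\delta_X\ge\delta_{L_p}$ (the defining infimum for $\delta_X$ runs over a smaller set). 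For (a) I take $Y=\R$, a Banach space; for (b), $Y$ is the given Banach space. In either case the final alternative ``$C$ closed or $Y$ Banach'' in Corollary~\ref{C:rozsir}(b) holds because $Y$ is Banach.

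For (a), let $f$ be a continuous convex function on $C$. Then $f$ is d.c.\ with control function $f$ itself, since for $|t|\le1$ the function $f+tf=(1+t)f$ is convex. If $D$ is a bounded open convex subset of $C$, then, being open in $X$ and contained in $C$, it satisfies $D\subset\mathrm{int}\,C$; by hypothesis $f$ is Lipschitz on the bounded set $D$, so $f|_D$ is a Lipschitz control function for $F|_D=f|_D$. Thus all hypotheses of Corollary~\ref{C:rozsir}(b) hold and $f$ extends to a d.c.\ function on $X$.

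For (b), read ${\cal C}^{1,1}$ as: $F$ is Fr\'echet differentiable with $L$-Lipschitz derivative $DF$ on $C$. Fix an equivalent norm on $X$ whose square $\phi:=\|\cdot\|^2$ is uniformly convex of power type $2$, i.e.\ there is $c>0$ with $\phi(x)+\phi(z)-2\phi\bigl(\tfrac{x+z}{2}\bigr)\ge c\,\|x-z\|^2$ for all $x,z$ (the quantitative form of modulus of convexity of power type $2$). The crux is that $\tfrac{L}{4c}\,\phi$ is a control function for $F$ on $C$. Indeed, for $y^*\in Y^*$ with $\|y^*\|\le1$ the scalar function $g:=y^*\circ F$ is ${\cal C}^{1,1}$ with $L$-Lipschitz derivative, so a second-order Taylor estimate at the midpoint $m=\tfrac{x+z}{2}$ (in which the first-order terms cancel) gives $2g(m)-g(x)-g(z)\le\tfrac{L}{4}\|x-z\|^2$, whereas the choice of $\phi$ gives $\tfrac{L}{4c}\bigl(\phi(x)+\phi(z)-2\phi(m)\bigr)\ge\tfrac{L}{4}\|x-z\|^2$. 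Adding these shows $g+\tfrac{L}{4c}\phi$ is midpoint convex, hence (being continuous) convex. So $\tfrac{L}{4c}\phi$ is a convex control function witnessing that $F$ is d.c.\ on $C$; since $\phi$ is Lipschitz on every bounded set, its restriction to any bounded open convex $D\subset C$ is a Lipschitz control function for $F|_D$. Corollary~\ref{C:rozsir}(b) then yields a d.c.\ extension of $F$ to all of $X$.

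The step I expect to require the most care is the quantitative link used in (b) between ``modulus of convexity of power type $2$'' and the strong (power-type-$2$) convexity of the squared norm $\phi$; I would quote this from \cite{BeLin}, making sure the chosen equivalent norm is genuinely $2$-uniformly convex rather than merely uniformly convex, and that the constant $c$ is the right one for the midpoint inequality. A secondary point is the intended meaning of ${\cal C}^{1,1}$: if only a \emph{locally} Lipschitz derivative is assumed, then one produces control functions $\tfrac{L_D}{4c}\phi$ on bounded open convex pieces $D$ (where $DF$ is Lipschitz with some constant $L_D$) and first glues them via Lemma~\ref{L:hartman} to see $F$ is d.c.\ on $C$; the remainder of the argument is unchanged. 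Everything else is routine once Corollary~\ref{C:rozsir}(b) is invoked.
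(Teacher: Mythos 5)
Your proof is correct and follows essentially the same route as the paper: both parts reduce to Corollary~\ref{C:rozsir}(b), using the power-type-2 modulus of convexity of $L_p(\mu)$ ($1<p\le2$) and its subspaces, with the convex function serving as its own control function in (a). The only difference is that for (b) the paper simply cites \cite[Proposition~1.11]{VeZa2} for the fact that a $\mathcal{C}^{1,1}$ mapping is d.c.\ with a control function that is Lipschitz on bounded sets, whereas you reprove that fact directly via the $2$-uniform convexity inequality for the squared norm and a midpoint Taylor estimate --- which is essentially the content of the cited proposition.
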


\begin{proof}
It is known (see e.g.\ \cite[p.410]{BeLin}) that $X$, in the $L_p$-norm,
has modulus of convexity of power type
2. Therefore, \cite[Proposition~1.11]{VeZa2} easily implies that
each Banach space-valued ${\cal C}^{1,1}$ mapping on any open
convex subset of $X$ is d.c.\ with a control function that is Lipschitz on
bounded sets. Now, both (a) and (b) follow from Corollary~\ref{C:rozsir}(b).
\end{proof}

For extensions from closed finite-dimensional convex subsets, we have
the following simple corollary.
Recall that a {\em finite-dimensional set} (in a vector space) is a set whose linear span is
finite-dimensional.

\begin{corollary}
Let $X,Y$ be normed linear spaces, and
$F\colon C\to Y$ be a d.c.\ mapping, where $C\subset X$ is a
finite-dimensional closed convex
set. Then the following assertions are equivalent:
\begin{enumerate}
\item[(i)] $F$ admits a d.c.\ extension $\widehat{F}\colon X\to Y$;
\item[(ii)] $F$ has a locally Lipschitz control function $f\colon C\to\R$.
\item[(iii)]For each $x \in C$, there exists $r_x >0$ such that the restriction of
$F$ to $C \cap U(x,r_x)$ has a
 Lipschitz control function.
\end{enumerate}
\end{corollary}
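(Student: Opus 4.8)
The plan is to prove the three equivalences by establishing the cycle $(i)\Rightarrow(ii)\Rightarrow(iii)\Rightarrow(i)$, exploiting the fact that $C$ is finite-dimensional and closed so that its relative interior is nonempty in its affine hull. First I would handle $(i)\Rightarrow(ii)$: if $\widehat F\colon X\to Y$ is a d.c.\ extension, it has a continuous (hence, on $\R^d$, locally Lipschitz) convex control function $\widehat f$ on $X$, and the restriction $f:=\widehat f|_C$ controls $F$ and is locally Lipschitz. The implication $(ii)\Rightarrow(iii)$ is immediate, since a locally Lipschitz control function $f$ is Lipschitz on a neighborhood $C\cap U(x,r_x)$ of each point.

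The substance lies in $(iii)\Rightarrow(i)$, and here the strategy is to reduce to \textbf{Corollary~\ref{C:rozsir}}. Since $C$ is finite-dimensional I may assume $X=\operatorname{span} C=\R^d$, work first inside the affine hull $H$ of $C$, and treat the two cases according to whether $C$ has nonempty interior in $H$. The main point is to verify the hypothesis of \textbf{Corollary~\ref{C:rozsir}(b)}, namely that the restriction of $F$ to every \emph{bounded open convex} subset of $C$ has a Lipschitz control function. This does not follow verbatim from $(iii)$, which only gives Lipschitz control \emph{locally}; I would upgrade it using a compactness argument. Given a bounded open convex $D\subset C$, its closure $\overline D$ is compact, so finitely many balls $U(x_1,r_{x_1}),\dots,U(x_m,r_{x_m})$ from $(iii)$ cover $\overline D$, and on each $C\cap U(x_j,r_{x_j})$ the map $F$ has a Lipschitz control function $\gamma_j$. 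I would then glue the $\gamma_j$ into a single Lipschitz control function on $D$, for instance via a Lipschitz convex partition-of-unity / patching argument, or more simply by taking a suitable finite maximum of locally defined convex controls and adding a large multiple of a reference convex function such as $\|\cdot\|^2$ (which on $\R^d$ is smooth and Lipschitz on bounded sets) to absorb the patching; the boundedness of $D$ keeps everything Lipschitz. I expect \textbf{this patching step to be the main obstacle}: combining several locally valid control functions into one global control function while preserving both convexity and the Lipschitz estimate on all of $D$ requires care, since naive maxima of control functions need not control $F$ and sums of locally defined functions require extending each to all of $D$.

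Once the hypothesis of \textbf{Corollary~\ref{C:rozsir}} is verified, the conclusion follows directly. On $\R^d$ the Euclidean norm has modulus of convexity of power type $2$, so $X$ satisfies the renorming hypothesis, and $C$ is closed by assumption; thus \textbf{Corollary~\ref{C:rozsir}(b)} yields a d.c.\ extension of $F$ to all of $X$, giving $(i)$. In the degenerate case where $C$ has empty interior in $H$ (so $C$ lies in a proper affine subspace), I would first extend $F$ across $H$ using the same argument applied within $H=\R^{d'}$ with $d'=\dim H$, obtaining a d.c.\ map on $H$; since $H$ is a finite-dimensional (hence closed) affine subspace of $X$, a further application of \textbf{Corollary~\ref{C:rozsir}(b)}, or a direct product-type extension that is affine in the complementary directions, extends it d.c.\ to all of $X$. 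This two-stage reduction is routine once the key compactness-based patching of control functions is in place.
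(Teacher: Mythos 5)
Your reduction to Corollary~\ref{C:rozsir}(b) is the right skeleton and matches the paper's strategy, but the step you yourself flag as the main obstacle --- merging the finitely many local Lipschitz controls $\gamma_1,\dots,\gamma_m$ into one Lipschitz control on the bounded relatively open convex set $B\subset C$ --- is left unresolved, and the concrete devices you float would not close it. A maximum of (extended) controls fails even after adding $K\|\cdot\|^2$: near a point where only $\gamma_i$ is a genuine control, the active branch of the maximum may be some other extended $\tilde\gamma_j$, and there $y^*\circ F+\tilde\gamma_j=(y^*\circ F+\gamma_i)+(\tilde\gamma_j-\gamma_i)$ carries concave kinks coming from $-\gamma_i$ (a nonsmooth convex function), which no quadratic term can absorb; and a Lipschitz partition of unity destroys convexity altogether. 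The fix --- which is exactly the paper's proof --- is the ``sum'' route that you mention only as a difficulty: since each $\gamma_i$ is Lipschitz and convex on the convex set $C\cap U(x_i,r_{x_i})$, Lemma~\ref{F:1}(c) extends it to a Lipschitz convex function $\psi_i$ on all of $X_0:=\mathrm{span}\,C$, and one puts $\psi:=\sum_{i=1}^m\psi_i$. This works because on $B\cap U(x_i,r_{x_i})$ one has $\psi=\gamma_i+\sum_{j\ne i}\psi_j$, i.e.\ a valid local control plus a convex function; hence $y^*\circ F+\psi$ is convex on a relatively open neighbourhood of every point of $B$, and local convexity on an open convex subset of $X_0$ implies convexity on $B$. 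So the patching needs no new machinery: extend first, then sum.

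Two further corrections. First, your case distinction inside the affine hull $H$ is vacuous: a finite-dimensional convex set always has nonempty interior in its affine hull (after translating so that $0\in C$, one has $H=\mathrm{span}\,C=X_0$), which is why the paper needs no cases. Second, the passage from $X_0$ to $X$ cannot go through ``a further application of Corollary~\ref{C:rozsir}(b)'': that corollary requires the domain to have nonempty interior in the ambient space (a proper subspace $X_0$ has empty interior in $X$) and requires the ambient space to admit a renorming with modulus of convexity of power type 2, which the arbitrary normed space $X$ of the statement need not. Your alternative suggestion is the correct one and is what the paper does: take a continuous linear projection $\pi\colon X\to X_0$ (it exists because $X_0$ is finite-dimensional) and set $\widehat F:=F_0\circ\pi$; this is d.c.\ because for any control function $f_0$ of $F_0$ the function $f_0\circ\pi$ controls $\widehat F$, since $y^*\circ\widehat F+f_0\circ\pi=(y^*\circ F_0+f_0)\circ\pi$ is convex as a convex function composed with a linear map.
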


\begin{proof}
The implication $(i)\Rightarrow(ii)$ is obvious since each continuous convex
function on $X$ is locally Lipschitz. The implication $(ii)\Rightarrow(iii)$ is trivial.

Let $(iii)$ hold. Suppose that $0\in C$ and denote
$X_0:=\mathrm{span}\,C$ ($=\mathrm{aff}\,C$). Then $C$, being finite-dimensional,
has a nonempty interior in $X_0$.
 Let $B \subset C$ be a bounded convex set which is open in $X_0$. For each $x \in \overline{B} \cap C$ choose $r_x$
  by $(iii)$ and a Lipschitz convex function $\varphi_x$ on $C \cap U(x,r_x)$
which controls $F$ on $C \cap U(x,r_x)$. Since   $\overline{B} \cap C$ is compact, we can choose $x_1, \dots,x_n$ in
 $\overline{B} \cap C$ such that  $\overline{B} \cap C \subset \bigcup_{i=1}^n \, U(x_i, r_{x_i})$.
Extend $\varphi_{x_i}$ to a Lipschitz convex function $\psi_i$ on $X_0$
(cf. Lemma~\ref{F:1}(c)) and put
   $\psi := \sum_{i=1}^n \psi_i$. Then clearly $\psi|_{B}$ is a Lipschitz control function of $F|_B$.

By Corollary~\ref{C:rozsir}(b), there exists a d.c.\ extension
$F_0\colon X_0\to Y$ of $F$. Let $\pi\colon X\to X_0$ be a
continuous linear projection onto $X_0$. Then the mapping
$\widehat{F}:=F_0\circ\pi$ is a d.c.\ extension of $F$ (cf. \cite[Lemma 1.5(b)]{VeZa1}).
Thus (i) holds and the proof is complete.
\end{proof}

%%%%%%%%%%%%%%%%%%%%%%%%%%%%%%%%%%%%%%%%%%%%%%%%%%%%%%%%%%%%%%%%%%%%%%%%

\section{Counterexamples}

\begin{example}\label{pasovec}
There exists a continuous convex function $f$ on the strip
$P:= \R \times [-1,0]$ such that
\begin{enumerate}
\item
$f$ has a d.c.\ extension to $\R^2$, and
\item
$f$ has no convex extension to $\R^2$.
\end{enumerate}
\end{example}

\begin{proof}
For  $(x,y) \in P$, we set
$$ f(x,y) := \sup \{a_t(x,y):\ t \in \R\},\  \text{where}\ \ a_t(x,y) := t^2 + 2t(x-t) + t^2 y.$$
Observe that
\begin{equation}\label{supp}
a_t(\cdot,0) \ \text{is the support affine function to the function}\ \ p(x):=x^2\ \text{at}\ t,
\end{equation}
\begin{equation}\label{nez}
a_t(t,y) \geq 0 \ \ \text{for}\ \ y \in [-1,0],\ \text{and}
\end{equation}
\begin{equation}\label{partder}
\frac{\partial a_t}{\partial x} (z) = 2t,\ \ \ \frac{\partial a_t}{\partial y} (z) = t^2\ \ (z\in \R^2).
\end{equation}
Now fix $\tau \in \R$ and consider a $t \in \R$. Then \eqref{supp} implies $a_t(\tau,0) \leq p(\tau) = \tau^2$,
 so \eqref{partder} gives $a_t(\tau,y) \leq \tau^2$ for $y \in [-1,0]$.
Consequently, $f(\tau,0) = a_{\tau}(\tau,0)= \tau^2$ and
  $f(\tau,y) \leq \tau^2 < \infty$
  for $y \in [-1,0]$. Thus $f$ is a finite convex function on $P$.
Moreover $f\ge0$ on $P$.

Note that $a_t(\tau,0) = -t^2 + 2t \tau \leq 0$ whenever $|t| \geq 2 |\tau|$.
If $z=(z_1,z_2)\in(\tau-1,\tau +1) \times [-1,0]$ and
$|t|\ge2(|\tau|+1)$, then $a_t(z_1,0)\le0$ since $|t|\ge2|z_1|$. For
such $z$ we have
$a_t(z)\le0\le f(z)$ because $a_t(z_1,\cdot)$ is nondecreasing by
\eqref{partder}. It follows that
  \begin{equation}\label{supmen}
  f(z) = \sup \{a_t(z):\ |t| \leq 2(|\tau|+1) \} \ \ \text{for}\ \ z \in (\tau-1,\tau +1) \times [-1,0].
  \end{equation}
  Using \eqref{supmen} and \eqref{partder}, we easily obtain that $f$ is locally Lipschitz on $P$; so it is Lipschitz
   on each bounded subset of $P$. Consequently,  (i) follows from
Corollary~\ref{C:rozsir}.

Now, suppose that (ii) is false, that is, there exists a
convex extension $\tilde{f}\colon\R^2\to\R$ of $f$.
Since $\tilde f(\tau,0) = f(\tau,0)=
    \tau^2$, we have  $\frac{\partial \tilde f}{\partial x} (\tau,0) = 2 \tau$ for each $\tau \in \R$.
    Now we will prove that, for each $\tau >0$,
    \begin{equation}\label{smder}
     d^+_{(0,-1)} \tilde f (\tau,0) =  d^+_{(0,-1)} a_{\tau} (\tau,0) = -\tau^2
    \end{equation}
where $d^+_v g(z)$ denotes the one-sided derivative of $g$ at $z$ in the
direction $v$.
     To this end, choose an arbitrary $\ep >0$ and find $0<\delta< \sqrt{\ep}$ such that $|t^2 - \tau^2| < \ep$ whenever
      $|t-\tau|< \delta$. If $|t-\tau| \geq \delta$ and  $y \in (-\delta^2/\tau^2,0]$, then
      \begin{equation*}
\begin{split} a_{\tau}(\tau,y) - a_t(\tau,y)  &= \tau^2 + \tau^2y + t^2 - 2t\tau -t^2y \\
& = (t-\tau)^2 + (\tau^2-t^2)y \geq \delta^2 + \tau^2 y  > 0.
\end{split}
\end{equation*}
     If $|t-\tau|< \delta$ and $y\le0$, then
     $$ a_{\tau}(\tau,y) - a_t(\tau,y)   = (t-\tau)^2 + (\tau^2-t^2)y \geq \ep y.$$
     Therefore,    $\tilde f(\tau,y) \geq a_{\tau}(\tau,y) \geq \tilde f(\tau,y) + \ep y$
whenever $y \in (-\delta^2/\tau^2,0]$.
Since $\ep>0$ was arbitrary,
we easily obtain \eqref{smder}.

Since $\tilde f$ is convex, the function $v \mapsto d^+_v \tilde f (\tau,0)$ is
positively homogenous and subadditive.
 Therefore, for $\tau>0$, we have
 $$ d^+_{(\tau,-3)} \tilde f (\tau,0) \leq  d^+_{(0,-3)} \tilde f (\tau,0) +
  d^+_{(\tau,0)} \tilde f (\tau,0) = -3\tau^2 + 2 \tau^2 = - \tau^2,$$
  and consequently  $d^+_{(-\tau,3)} \tilde f (\tau,0) \geq \tau^2$. By convexity of $\tilde f$,
  $$ \tilde f (0,3) \geq \tilde f(\tau,0) + d^+_{(-\tau,3)} \tilde f (\tau,0) \geq \tau^2 + \tau ^2 = 2\tau^2.$$
Since  $\tau >0$ was arbitrary, $\tilde f (0,3) = \infty$, a contradiction.
\end{proof}

\begin{example}\label{koulovec}
In $X=\ell_2$,
there exist a closed convex set
$C\subset U(0,1)$ with a nonempty interior
and a continuous convex function
$f\colon C\to\R$ such that:
\begin{enumerate}
\item[(a)] $f$ has a continuous convex extension to
$U(0,1)$, in particular, $f$ is locally Lipschitz on
$C$
(even there exists a nondecreasing sequence of open convex sets
$A_n\sipka U(0,1)$ such that $f$ is Lipschitz on each
$A_n\cap C$);
\item[(b)] $f$ has no d.c.\ extension to  $U(0,r)$ whenever $r>1$.
\end{enumerate}
\end{example}

\begin{proof}
Let $e_n$ be the $n$-th vector of the standard basis of $X=\ell_2$.
For $n,k\in\N$ with $n<k$, put
$$\textstyle
z_{n,k}=(1-\frac1n)e_n + h_n(1-\frac1k)e_k
$$
where $h_n>0$ is such that $(1-\frac1n)^2+h_n^2=1$. Note that
$\|z_{n,k}\|^2=(1-h_n^2)+h_n^2(1-h_k^2)=1-h_n^2 h_k^2$. Put
$$\textstyle
C:=\overline{\mathrm{conv}}\left[
\frac12 B_{X}\cup\{z_{n,k}:\; n,k\in\N,\;n<k\}
\right].
$$
Obviously, $C$ is a closed convex set with a nonempty interior and  $C \subset B_X$. We claim
that $C\subset U(0,1)$.

If this is not the case, there exists $x\in C$ with $\|x\|=1$. Thus
$\sup\langle x,C\rangle=\langle x,x\rangle=1.$ On the other hand,
there exists $n_0\in\N$ such
that
$|\langle x,e_n\rangle|<\frac13$ and $h_n<\frac13$ whenever $n>n_0$.
Thus $|\langle x,z_{n,k}\rangle|\le\frac23$ for $k>n>n_0$. There
exists
$k_0>n_0$ such that
$|\langle x,e_k\rangle|<\frac{1}{2n_0}$ whenever $k>k_0$. Hence, for
$n\le n_0$ and $k>k_0$, we have
$|\langle x,z_{n,k}\rangle|\le(1-\frac1n)+\frac{1}{2n_0}\le
1-\frac{1}{n_0}+\frac{1}{2n_0}=1-\frac{1}{2n_0}$.
Since obviously
$\sup\langle x,\frac12 B_{X}\rangle=\frac12$, we obtain
\begin{align*}
\sup\langle x,C\rangle&=\textstyle
\max\bigl\{\frac12 , \sup\left\{\langle x,z_{n,k}\rangle:\; n,k\in\N,\;n<k\right\}\bigr\}\\
&\le\textstyle
\max\bigl[\{\frac23, 1-\frac{1}{2n_0}\} \cup
\{\|z_{n,k}\|:\; n<k\le k_0,\;n\le n_0\}\bigr]<1.
\end{align*}
This contradiction proves our claim.

The function $x\mapsto 1-\|x\|$ is positive, continuous and concave on
$U(0,1)$. Since the function $t\mapsto\frac1t$ is convex
and decreasing on $(0,\infty)$, the composed function
$g(x)=\frac{1}{1-\|x\|}$ is convex
continuous, and hence locally Lipschitz, on $U(0,1)$. Thus $f:=g|_C$
satisfies (a) by Lemma~\ref{K}. Let us show (b). By Lemma~\ref{ndc}, it
suffices to prove
that $g$ is unbounded on subsets of $C$
of arbitrarily
small diameter. Fix $n\in\N$. For any
two distinct indices $k,l>n$,  we have
$$\textstyle
\|z_{n,k}-z_{n,l}\|^2=
h_n^2\left[(1-\frac1k)^2+(1-\frac1l)^2\right]\le 2h_n^2
$$
which implies $\mathrm{diam}\,\{z_{n,k}: k>n\}\le\sqrt{2}\,h_n$.
This completes the proof since $g(z_{n,k})\to\infty$ as $k\to\infty$.
\end{proof}

%%%%%%%%%%%%%%%%%%%%%%%%%%%%%%%%%%%%%%%%%%%%%%%%%%%%%%%%%%%%%%%%%

\section{Extensions of convex functions from subspaces}

Let $Y$ be a closed subspace of a normed linear space $X$, and
$f\colon Y\to \R$ a continuous convex function. The present section
concerns the problem of existence of a continuous convex extension
$\hat{f}\colon X\to\R$ of $f$.

An example of nonexistence of $\hat f$ was given in \cite[Example~4.2]{BMV}.
On the other hand, it is easy and well known that such $\hat f$ exists if either $Y$ is
complemented in $X$ or $f$ is Lipschitz (see, e.g., \cite{BMV}). Borwein
and Vanderwerff proved in \cite[Fact, p.1801]{BV} that $\hat f$ exists
whenever $f$ is bounded on each bounded subset of $Y$; however, this
sufficient condition is not necessary (see Remark~\ref{nenutna}).
The following theorem contains a necessary and sufficient condition $(iv)$
of the same type, but the proof is more difficult and uses different
methods.
Our main new observation is that a modification of Hartman's construction
from \cite{H} gives the implication $(iv)\Rightarrow(ii)$; and we use also
the implication $(ii)\Rightarrow(i)$
already proved in \cite{BMV}.

%For instance, we use the implication $(ii)\Rightarrow(i)$
%already proved in \cite{BMV}; our main new observation is that a
%modification of Hartman's construction from \cite{H} gives the
%implication $(iv)\Rightarrow(ii)$.

\begin{theorem}\label{indiv}
Let $X$ be a normed linear space, $Y \subset X$ its closed subspace,
and $f\colon Y \to \R$ a continuous convex function.
 Then the following statements are equivalent.
 \begin{enumerate}
 \item
 The function $f$ admits a continuous convex extension to $X$.
 \item
 There exists a continuous convex function $g\colon X \to \R$ such that $f \leq g|_{Y}$.
 \item
 $f$ admits a d.c.\ extension to $X$.
 \item
 There exists a sequence $\{C_n\}$ of nonempty open convex subsets of $X$ such that $C_n \nearrow X$ and
  $f$ is bounded on each set $C_n \cap Y\ (n \in \N)$.
 \item
 There exists a sequence $\{B_n\}$ of nonempty open convex subsets of $X$ such that $B_n \nearrow X$ and
  $f$ is Lipschitz on each set $B_n \cap Y\ (n \in \N)$.
   \end{enumerate}
\end{theorem}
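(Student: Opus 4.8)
The plan is to establish the five equivalences by the cycle $(i)\Rightarrow(iii)\Rightarrow(iv)\Rightarrow(ii)\Rightarrow(i)$, and to attach $(v)$ by proving $(iv)\Leftrightarrow(v)$ separately. Two of the links in the cycle are essentially free: $(i)\Rightarrow(iii)$ holds because a continuous convex function is its own d.c.\ representation (it equals itself minus $0$), so any continuous convex extension is in particular a d.c.\ extension; and $(ii)\Rightarrow(i)$ I would simply quote from \cite{BMV}, which is exactly the nontrivial step --- manufacturing an honest convex \emph{extension} out of a global convex \emph{majorant} --- that the Borwein--Montesinos--Vanderwerff machinery supplies.

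For $(iii)\Rightarrow(iv)$: write a d.c.\ extension as $F=g_1-g_2$ with $g_1,g_2$ continuous convex on $X$, and apply Lemma~\ref{K} to $g_1+g_2$ to obtain bounded open convex $D_n\nearrow X$ on which $g_1+g_2$ is bounded. Each $g_i$ is bounded below on the bounded set $D_n$ by Lemma~\ref{F:1}(a), hence both $g_i$ are bounded on $D_n$, and so is $F$; thus $f=F|_Y$ is bounded on $D_n\cap Y$, which is $(iv)$. The equivalence $(iv)\Leftrightarrow(v)$ is routine. From $(iv)$, Lemma~\ref{L:dn} (with $C=X$) yields bounded open convex $B_n\cc C_n$ with $B_n\nearrow X$; then $B_n\cap Y\cc C_n\cap Y$ inside $Y$, so Lemma~\ref{F:1}(b) makes $f$ Lipschitz on $B_n\cap Y$, giving $(v)$. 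Conversely, from $(v)$ put $C_n:=B_n\cap U(0,n)$; these increase to $X$, and $f$, being Lipschitz on the bounded set $C_n\cap Y\subset B_n\cap Y$, is bounded there, giving $(iv)$.

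The heart of the matter is $(iv)\Rightarrow(ii)$, a variant of Hartman's construction \cite{H}. After reducing (via Lemma~\ref{L:dn}, a reindexing, and adding a constant to $f$) to the case where the $C_n$ are bounded open convex with $0\in C_1$, $C_n\cc C_{n+1}$, $\bigcup_n C_n=X$, and $M_n:=\sup_{C_n\cap Y}f$ satisfies $1\le M_n\le M_{n+1}$, let $\mu_n$ denote the (Lipschitz, sublinear, continuous) Minkowski gauge of $C_n$; note that $\mu_1\ge\mu_2\ge\cdots$ and $\mu_n(x)\to0$ for each fixed $x$. With slopes $s_n:=nM_n$, define the continuous convex functions $\psi_1\equiv M_1$ and $\psi_n:=M_n+s_n(\mu_{n-1}-1)$ for $n\ge2$, and set $g:=\sup_n\psi_n$. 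Majorization: if $y\in(C_n\setminus C_{n-1})\cap Y$ then $\mu_{n-1}(y)\ge1$, so $\psi_n(y)\ge M_n\ge f(y)$ (and $g\ge M_1\ge f$ on $C_1\cap Y$); hence $g|_Y\ge f$. Finiteness and continuity: fixing $x_0\in C_m$ and a small ball $V$ with $\overline V\cc C_m$, one has $\mu_{n-1}\le\mu_m\le 1-c$ on $V$ for all $n\ge m+1$ and some $c>0$, whence $\psi_n\le M_n(1-nc)\to-\infty$ uniformly on $V$; thus on $V$ the supremum reduces to a finite maximum of Lipschitz convex functions. Therefore $g$ is finite, locally Lipschitz and convex on $X$, which is $(ii)$.

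The main obstacle is precisely this last step: converting the \emph{local} boundedness data of $(iv)$ into a \emph{single} globally finite, continuous convex majorant. The delicate point is that the slopes $s_n$ must be fixed in advance, independently of the base point $x_0$, yet still force every outer shell $\psi_n$ to tend to $-\infty$ on each fixed neighborhood; the choice $s_n=nM_n$ succeeds because the ``depth'' $1-\mu_{n-1}(x_0)$ stays bounded away from $0$ as $n\to\infty$. Keeping the supremum a \emph{locally finite maximum}, rather than appealing to an automatic-continuity theorem for everywhere-finite convex functions, is also what lets the construction go through when $X$ is not complete.
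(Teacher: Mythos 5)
Your proof is correct, and its skeleton matches the paper's: in both, the whole weight rests on $(iv)\Rightarrow(ii)$, proved by a Hartman-type construction of a convex majorant, while $(ii)\Rightarrow(i)$ is quoted from \cite{BMV}. The differences are in execution. For the easy links, the paper goes $(iii)\Rightarrow(ii)$ directly --- if $u-v$ is a d.c.\ extension of $f$, pick a continuous affine $a\le v$ and note that $g:=u-a$ majorizes $f$ on $Y$ --- and obtains $(v)$ from $(i)$ via Lemma~\ref{K}; your detours $(iii)\Rightarrow(iv)$ (Lemma~\ref{K} applied to $g_1+g_2$ plus Lemma~\ref{F:1}(a)) and $(iv)\Leftrightarrow(v)$ (Lemma~\ref{L:dn} plus Lemma~\ref{F:1}(b)) are equally valid, just a bit longer. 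In the crux, the paper, after the same reduction via Lemma~\ref{L:dn}, builds the majorant \emph{inductively}: $g_k:=\max\{g_{k-1},\,a_k+b_k\,\dist(\cdot,D_{k-1})\}$ with $a_k\le\inf_{D_{k-1}}g_{k-1}$ and $b_k$ chosen so that $a_k+b_k\,d_{k-1}\ge M_{k+1}$; the sequence $\{g_k\}$ is locally eventually constant, and its limit is the desired $g$. You instead exhibit a single closed-form supremum $g=\sup_n\bigl[M_n+nM_n(\mu_{n-1}-1)\bigr]$ of gauge-based Lipschitz convex functions and verify that near every point it reduces to a finite maximum. The two mechanisms are equivalent in substance --- your $\mu_{n-1}-1$ plays exactly the role of the paper's $\dist(\cdot,D_{k-1})$, and both exploit the gap $C_n\cc C_{n+1}$ (respectively $d_n>0$) supplied by Lemma~\ref{L:dn}, both work without completeness of $X$ --- but yours trades the induction and stabilization argument for the a priori slope choice $s_n=nM_n$, whose purpose (forcing $M_n-s_nc\le 1-nc\to-\infty$ once $M_n\ge1$) you correctly identify as the heart of the matter; the paper's inductive scheme avoids the gauge machinery, the normalization $M_n\ge1$, and the bookkeeping needed to show local finiteness of the supremum. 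One small point to add: \cite[Lemma~4.7]{BMV}, which you quote for $(ii)\Rightarrow(i)$, is stated there for Banach spaces, so since your theorem concerns general normed $X$ you should note (as the paper does) that the proof in \cite{BMV} works verbatim in the normed linear case, the extension being continuous because it is locally bounded above.
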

\begin{proof}
$(i)\Rightarrow(ii)$ is obvious, while $(ii)\Rightarrow(i)$ was proved
in \cite[Lemma~4.7]{BMV} for $X$ a Banach space. However, the proof
therein works also in the normed linear case (note that the convex
extension $\tilde{f}$ from \cite[Lemma~4.7]{BMV} is continuous since it
is locally upper bounded).

$(i)\Rightarrow(iii)$ is trivial.

$(iii)\Rightarrow(ii)$. If (iii) holds, there exist continuous convex
functions $u,v$ on $X$ such that $u(y)-v(y)=f(y)$ for $y\in Y$. Choose a
continuous affine function $a$ on $X$ such that $a\le v$. Then
$$
u(y)-a(y)=f(y)+(v(y)-a(y))\ge f(y),\quad y\in Y;
$$
so we can put $g:=u-a$.

$(i)\Rightarrow(v)$ follows immediately applying Lemma~\ref{K} to a
continuous convex extension $\hat{f}$ of $f$.

$(v)\Rightarrow(iv)$. Clearly, it suffices to put
$C_n=B_n\cap U(0,n)$ ($n\in\N$).

It remains to prove $(iv) \Rightarrow (ii)$.
Using Lemma~\ref{L:dn} and an obvious shift of indices,
it is easy to find a sequence $\{D_n\}$ of bounded open convex
subset of $X$ such that (for each $n$)
$D_n \cap Y \neq\emptyset$, $f$ is bounded on $D_n\cap Y$,
$d_n: = \dist(D_n, X \setminus D_{n+1})>0$, and $D_n \nearrow X$.

 Now we will construct inductively a sequence $(g_n)_{n\in \N}$ of 
functions on $X$ such that, for each $n \in \N$,
 \begin{enumerate}
 \item[(a)]
 $g_n$ is convex and Lipschitz;
 \item[(b)]
 $g_n = g_{n-1}$ \ on\ $D_{n-1}$ whenever  $n>1$;
 \item[(c)]
 $g_n \geq f$ \ on \ $D_{n+1} \cap Y$.
\end{enumerate}
Set   $M_n := \sup\{f(y):\ y \in D_n \cap Y\}$; by the assumptions $M_n < \infty$.

Define $g_1(x) := M_2,\ x \in X$. Then the conditions (a), (b), (c) clearly hold for $n=1$.

Now suppose that $k>1$ and  we already have
$g_1,\dots, g_{k-1}$ such that  (a), (b), (c)  hold for each $1 \leq n <k$.
 We can clearly choose $a \in \R$ such that $g_{k-1}(x) \geq a$ for each $x \in D_{k-1}$, and then $b > 0$ such that
  $a + b\, d_{k-1} \geq M_{k+1}$. Define
  $$ g_k(x) := \max \{g_{k-1}(x), a + b\, \dist (x, D_{k-1})\},\ \ \ x \in X.$$
  We will show that  the conditions (a), (b), (c) hold for $n=k$.
   The validity of (a) is obvious. If $x \in D_{k-1}$, then $g_k(x) = \max\{g_{k-1}(x),a\} = g_{k-1}(x)$; so (b) holds.

   Now consider an arbitrary  $y \in D_{k+1} \cap Y$. If $y \in D_{k-1}$, using (b) for $n=k$ and (c) for $n=k-1$,
we obtain $g_k(y) = g_{k-1}(y) \geq f(y)$. If $y \in D_k \setminus D_{k-1}$, using the definition of $g_k$ and (c) for
    $n=k-1$, we also obtain $g_k(y) \ge g_{k-1}(y) \geq f(y)$. If $y \in D_{k+1} \setminus D_k$, then
    $$ g_k(y) \geq a + b \, \dist(y, D_{k-1}) \geq a + b\, d_{k-1} \geq M_{k+1} \geq f(y).$$
Now, for each $x \in X$, the sequence $\{g_n(x)\}$ is constant for large $n$'s, hence  $g(x):= \lim_{n\to \infty} g_n(x)$
 is defined on $X$. Since $g = g_n$ on $D_n$ by (b), 
the conditions (a) and (c) easily imply that $g$ is a continuous
  convex function on $X$ such that $f \leq g|_{Y}$.
\end{proof}

\begin{remark}\label{nenutna}
As already mentioned, (i) holds whenever
\begin{enumerate}
\item[$(*)$] $f$ is bounded on each bounded subset of $Y$
\end{enumerate}
(indeed, (iv) holds with $C_n:=U(0,n)$). To see that $(*)$ is not
necessary with $Y\ne X$, consider an arbitrary infinite dimensional
Banach space $X$, a closed subspace $Y$ of finite codimension in $X$,
and a continuous convex function $f$ on $Y$ which is unbounded on some
bounded set (for its existence, see \cite{BFV}).
\end{remark}

 \begin{theorem}\label{univ}
Let $X$ be a normed linear space and $Y \subset X$ its closed subspace.
Then the following statements are equivalent.
\begin{enumerate}
\item
Each continuous convex function $f\colon Y\to\R$ admits a continuous convex extension to $X$.
\item
If $\{C_n\}$ is a sequence of open convex subsets of $Y$ such that
$C_n\sipka Y$, then there exists
a sequence $\{D_n\}$ of open convex subsets of $X$ such that
$D_n\sipka X$ and $D_n\cap Y\subset C_n$.
\item
If $\{C_n\}$ is a sequence of open convex subsets of $Y$ such that
$C_n\sipka Y$, then there exists
a sequence $\{\tilde{C}_n\}$ of open convex subsets of $X$ such that
$\tilde{C}_n\sipka X$ and $\tilde{C}_n\cap Y = C_n$.
\end{enumerate}
\end{theorem}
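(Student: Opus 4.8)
The plan is to prove the cycle $(i)\Rightarrow(iii)\Rightarrow(ii)\Rightarrow(i)$. Two of these links will be cheap. The implication $(iii)\Rightarrow(ii)$ is immediate: if $\tilde C_n\cap Y=C_n$ then a fortiori $\tilde C_n\cap Y\subset C_n$, so the very sequence $\{\tilde C_n\}$ witnesses $(ii)$. The implication $(ii)\Rightarrow(i)$ will reduce the ``universal'' extendability to the single-function criterion already established in Theorem~\ref{indiv}. The real content lies in $(i)\Rightarrow(iii)$, where from the blanket extension hypothesis I must manufacture, for a prescribed exhaustion $\{C_n\}$ of $Y$, a matching exhaustion of $X$ whose trace on $Y$ is exactly $\{C_n\}$.

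For $(ii)\Rightarrow(i)$ I would fix a continuous convex $f\colon Y\to\R$ and apply Lemma~\ref{K} inside $Y$ to obtain bounded open convex sets $C_n\sipka Y$ on each of which $f$ is Lipschitz, hence bounded. Feeding this sequence into $(ii)$ yields open convex $D_n\sipka X$ with $D_n\cap Y\subset C_n$. Then $f$ is bounded on each $D_n\cap Y$, so (after discarding the finitely many empty initial terms) $\{D_n\}$ is precisely the data required by condition $(iv)$ of Theorem~\ref{indiv}; that theorem then delivers a continuous convex extension of $f$ to $X$. As $f$ was arbitrary, $(i)$ follows.

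For $(i)\Rightarrow(iii)$, given $C_n\sipka Y$ I would first thin the sequence via Lemma~\ref{L:dn}, obtaining bounded open convex $C_n'$ with $C_n'\sipky Y$ and $C_n'\subset\subset C_n$, and would prepend one further set $C_0'\subset\subset C_1'$. The gaps $C_n'\subset\subset C_{n+1}'$ furnish $\epsilon_n>0$ with $\dist(\cdot,C_n')\ge\epsilon_n$ on $Y\setminus C_{n+1}'$. Fixing levels $0<c_n\uparrow\infty$ and putting $\lambda_n:=c_{n+1}/\epsilon_n$, I would define the convex function $f:=\sup_{n\ge0}\lambda_n\dist(\cdot,C_n')$ on $Y$ (each $\dist(\cdot,C_n')$ being convex and $1$-Lipschitz). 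The supremum does two jobs: near any $y_0\in C_m'$ all terms with $n\ge m$ vanish, so locally $f$ is a finite maximum of Lipschitz functions and is thus finite and continuous; and if $y\notin C_m'$ the single term $n=m-1$ forces $f(y)\ge\lambda_{m-1}\epsilon_{m-1}=c_m$, giving $\{f<c_m\}\subset C_m'$ for every $m\ge1$. Since $f\equiv0$ on $C_0'$, each $\{f<c_m\}$ contains $C_0'$ and is nonempty. Invoking $(i)$ to extend $f$ to a continuous convex $\hat f\colon X\to\R$, I would set $D_n:=\{\hat f<c_n\}$ (open, convex, and $D_n\sipka X$ because $c_n\to\infty$), noting $D_n\cap Y=\{f<c_n\}$ is nonempty and contained in $C_n'\subset C_n$. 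Finally I would upgrade inclusion to equality by taking $\tilde C_n:=\mathrm{conv}(D_n\cup C_n)$: since $D_n\cap C_n=D_n\cap Y\ne\emptyset$, Lemma~\ref{conv}(c) makes $\tilde C_n$ open, while Lemma~\ref{conv}(a) together with $D_n\cap Y\subset C_n$ gives $\tilde C_n\cap Y=\mathrm{conv}[(D_n\cap Y)\cup C_n]=C_n$. Monotonicity and $\tilde C_n\supset D_n$ then yield $\tilde C_n\sipka X$, establishing $(iii)$.

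The hard part will be the construction of $f$ in $(i)\Rightarrow(iii)$, where three requirements must hold at once: $f$ must be finite and continuous on all of $Y$; its sublevel set at height $c_m$ must lie \emph{strictly} inside $C_m'$ rather than merely inside its closure, which is exactly why passing to a $\subset\subset$-exhaustion and exploiting the gaps $\epsilon_n$ is essential, since a naive gauge or distance built from the original $C_n$ would control the open set only up to its boundary; and the prepended innermost set $C_0'$ is what prevents an off-by-one failure at $m=1$ and keeps the traces $D_n\cap Y$ nonempty, a nonemptiness needed for the openness step through Lemma~\ref{conv}(c). Checking that the supremum is locally a finite maximum is the key technical point underlying finiteness and continuity; by contrast, the final passage from $D_n\cap Y\subset C_n$ to the exact equality $\tilde C_n\cap Y=C_n$ is a routine application of Lemma~\ref{conv}.
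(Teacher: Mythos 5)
Your proposal is correct, and all of its ingredients already occur in the paper's proof --- a convex function built from distance functions to a $\cc$-exhaustion, sublevel sets of its extension, reduction of (ii)$\Rightarrow$(i) to condition (iv) of Theorem~\ref{indiv}, and the passage $\tilde{C}_n:=\mathrm{conv}(D_n\cup C_n)$ via Lemma~\ref{conv}(a),(c) --- but you decompose the equivalence differently, and the difference is not purely cosmetic. The paper proves $(i)\Leftrightarrow(ii)$ and $(ii)\Leftrightarrow(iii)$: its $(i)\Rightarrow(ii)$ uses the series $f=\sum_n \epsilon_n^{-1}\dist(\cdot,C_n)$ (your weighted supremum $\sup_n\lambda_n\dist(\cdot,C_n')$ plays exactly the same role, with levels $c_n$ in place of integers), while its $(ii)\Rightarrow(iii)$ must work with sets $D_n$ coming from the black-box hypothesis (ii); since those $D_n$ may fail to meet $Y$ for small $n$, the paper needs an extra patch (the indices $n_0$, $n_1$ and the auxiliary ball $U(c,r)$) to keep $\mathrm{conv}(D_n\cup C_n)$ open and the traces exact. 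Your cycle $(i)\Rightarrow(iii)\Rightarrow(ii)\Rightarrow(i)$ merges the two constructions: because your $D_n=\{\hat{f}<c_n\}$ all contain the prepended set $C_0'$, every $D_n$ meets $C_n$, so Lemma~\ref{conv}(c) applies at every index and the paper's patch becomes unnecessary --- a modest but genuine simplification; the price is that $(ii)\Rightarrow(iii)$ is obtained only via the detour through (i), rather than as a self-contained implication. One point you should make explicit: statement (iii) permits the initial sets $C_n$ to be empty, and in that case your argument cannot start as written (Lemma~\ref{L:dn} requires nonempty interiors), and moreover your $\tilde{C}_n$ always meets $Y$, so the required equality $\tilde{C}_n\cap Y=C_n=\emptyset$ would fail; the one-line fix is to run your construction on the tail of nonempty $C_n$'s and set $\tilde{C}_n:=\emptyset$ for the finitely many indices with $C_n=\emptyset$. (The paper deals with this degenerate case explicitly in its $(ii)\Rightarrow(iii)$, while glossing over it, as you do, in the reduction at the start of $(i)\Rightarrow(ii)$.)
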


\begin{proof}
$(i)\Rightarrow(ii)$.
Let $\{C_n\}$ be as in (ii). Using Lemma \ref{L:dn}, we can (and do) suppose
that $C_n\ne\emptyset$ and $C_n\cc C_{n+1}$ in $Y$ ($n\in\N$). Fix $a\in C_1$
and put $C_0:=\{a\}$. Choose $\epsilon_n>0$ such that
$C_n+\ep_n B_Y\subset C_{n+1}$ ($n\ge0$), and consider the function
\[
f(y):=\sum_{n=0}^\infty \frac{1}{\ep_n}\,\mathrm{dist}(y,C_n)\,,\quad
y\in Y.
\]
It is easy to see that $f$ is a continuous convex function on $Y$;
therefore it admits a continuous convex extension $\hat{f}$ to $X$ by
(i). Let us show that the sets $D_n:=\{x\in X: \hat{f}(x)<n\}$ ($n\in\N$) have the
desired properties. Obviously, they are convex and open, and $D_n\sipka
X$. Consider $n\in\N$ and $y\in Y\setminus C_n$. Since
$\mathrm{dist}(y,C_k)\ge \epsilon_k$ for $0\le k<n$, we have
$$
f(y)\ge\sum_{k=0}^{n-1}\frac{1}{\epsilon_k}\,\mathrm{dist}(y,C_k)\,\ge n.
$$
This shows that $D_n\cap Y\subset C_n$.

$(ii)\Rightarrow(i)$.
Let $f$ be as in (i). Then the sets
$C_n:=\{y\in Y: f(y)<n,\;\|y\|<n\}$ ($n\in\N$) are open convex and satisfy
$C_n\sipka Y$. Observe that $f$ is bounded on each $C_n$ by
Lemma~\ref{F:1}(a).
Find $D_n$ ($n\in\N$) by (ii). Since $D_n\cap Y\subset C_n$, the sequence
$\{D_n\}$ satisfies the condition (iv) of Theorem~\ref{indiv}, and so (i)
follows.

$(ii)\Rightarrow(iii)$.
Let $\{C_n\}$ be as in (iii). Find $D_n$ ($n\in\N$) by (ii). Choose
$n_0\in\N$ such that $D_{n_0}\cap Y\ne\emptyset$. For $n\ge n_0$, put
$\tilde{C}_n:=\mathrm{conv}(D_n\cup C_n)$. By Lemma~\ref{conv}(a),(c),
we have that $\tilde{C}_n\cap Y=C_n$ and the convex set $\tilde{C}_n$ is open
for any $n\ge n_0$. Let $n_1$ be the smallest index such that
$C_{n_1}\ne\emptyset$.
Fix $c\in C_{n_1}$ and choose $r>0$ such that
$U(c,r)\subset\tilde{C}_{n_0}$ and
$U(c,r)\cap Y\subset C_{n_1}$. Put $\tilde{C}_n=\emptyset$ for
$1\le n<n_1$, and $\tilde{C}_n:=\mathrm{conv}(U(c,r)\cup C_n)$ for
$n_1\le n<n_0$. Using Lemma~\ref{conv}(a),(c) as above, we easily obtain
that the sequence $\{\tilde{C}_n\}$ has the desired properties.
The reverse implication $(iii)\Rightarrow(ii)$ is obvious.
\end{proof}

As an application of Theorem~\ref{univ}, we give an alternative proof
(see Theorem~\ref{subspace}) of the fact that separability of the quotient
space $X/Y$ is sufficient for extendability of all continuous convex
functions on $Y$ to $X$. This was proved in \cite{BMV} for Banach spaces
using a condition about nets in $Y^*$, equivalent to
(i) of Theorem \ref{univ}, together with  Rosenthal's extension theorem.
Our proof (for general normed
linear spaces) is based on Theorem~\ref{univ} and on the following
elementary lemma.

\begin{lemma}\label{kuzeliky}
Let $Y$ be a closed subspace of a normed linear space $X$. Let $B=r B_X$ for some
$r>0$. Then, for any $x\in X$, there exists $y_x\in Y$ such that
\begin{equation}\label{E:kuzeliky}
\mathrm{conv}[(x+B)\cup B] \cap Y
\subset \mathrm{conv}[\{y_x\}\cup 8B].
\end{equation}
\end{lemma}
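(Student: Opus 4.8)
The plan is to make the ``capsule'' $K:=\mathrm{conv}[(x+B)\cup B]$ completely explicit and then cover $K\cap Y$ by a suitable ``ice\-cream cone'' whose apex $y_x$ lies in $Y$. First I would use \eqref{Val} together with the convexity and symmetry of $B=rB_X$ to check that
\[
K=\{sx+b:\ s\in[0,1],\ b\in rB_X\}.
\]
Indeed $(1-t)(x+b_1)+tb_2=(1-t)x+b$ with $b:=(1-t)b_1+tb_2\in rB_X$, and conversely $sx+b=(1-t)(x+b)+tb$ for $t:=1-s$. Thus a typical point of $K\cap Y$ has the form $p=sx+b$ with $s\in[0,1]$, $\|b\|\le r$ and $p\in Y$.

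Next I would bound the parameter $s$. Put $d:=\dist(x,Y)$; since $Y$ is a subspace, $\dist(sx,Y)=s\,d$ for $s\ge0$, so from $p\in Y$ we get $s\,d=\dist(sx,Y)\le\|sx-p\|=\|b\|\le r$, whence $s\le s_0:=\min\{1,r/d\}$ (with $s_0:=1$ when $d=0$). The point is that $K$ reaches out only to $s_0x$, and $\dist(s_0x,Y)=\min\{d,r\}\le r$. Hence I can fix $w\in Y$ with $\rho:=\|s_0x-w\|\le 2r$ and define the apex
\[
y_x:=\tfrac{8}{5}\,w .
\]

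Finally I would verify \eqref{E:kuzeliky}. Given $p=sx+b\in K\cap Y$, set $\sigma:=s/s_0\in[0,1]$ and $e:=s_0x-w$, so that $sx=\sigma(w+e)$ and therefore $p=\sigma w+v$ with $v:=\sigma e+b$, $\|v\|\le\sigma\rho+r\le 2r\sigma+r$. Since $w=\tfrac58 y_x$, this reads $p=\lambda y_x+v$ with $\lambda:=\tfrac58\sigma\in[0,\tfrac58]$. Now $\mathrm{conv}[\{y_x\}\cup 8B]$ contains the whole ball $\lambda y_x+(1-\lambda)\,8B$ of radius $(1-\lambda)8r$ about $\lambda y_x$, so it suffices to check $\|v\|\le(1-\lambda)8r$. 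This is precisely $2r\sigma+r\le 8r-5r\sigma$, i.e.\ $7r\sigma\le 7r$, which holds since $\sigma\le1$. Therefore $v\in(1-\lambda)8B$ and $p=\lambda y_x+(1-\lambda)\tfrac{v}{1-\lambda}\in\mathrm{conv}[\{y_x\}\cup 8B]$, as required; note $1-\lambda\ge\tfrac38>0$, so no degenerate case arises.

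The only genuine obstacle is the placement of the apex. A cone pointed exactly at the nearest\-point projection $w$ of $s_0x$ is too thin near its tip to contain the full\-width end of the capsule (the end is a ball of radius $r$ about $s_0x$ cut by $Y$), so one must push the apex a definite factor beyond $w$; the factor $\tfrac85$ above does the job, and the generous constant $8$ in $8B$ is exactly what leaves room for it. Everything else is routine, and the argument is uniform: when $d=0$ one simply has $s_0=1$, $w=x$, $\rho=0$ and $y_x=\tfrac85x$.
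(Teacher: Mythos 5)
Your proof is correct, and it takes a genuinely different route from the paper's. You make the capsule explicit, $\mathrm{conv}[(x+B)\cup B]=\{sx+b:\ s\in[0,1],\ b\in rB_X\}$ (only convexity of $B$ is needed for this, not symmetry), use the homogeneity $\mathrm{dist}(sx,Y)=s\,\mathrm{dist}(x,Y)$ — this is exactly where the subspace structure of $Y$ enters — to bound the reach of the trace on $Y$ by $s\le s_0:=\min\{1,r/d\}$, and place the apex at $y_x=\tfrac85 w$, where $w\in Y$ is a near-best approximation of the far end $s_0x$; the inclusion then collapses to the one-line inequality $2r\sigma+r\le\bigl(1-\tfrac58\sigma\bigr)8r$, i.e.\ $\sigma\le1$. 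The paper instead works intrinsically with $P:=\mathrm{conv}[(x+B)\cup B]\cap Y$: it sets $s:=\sup\{\|y\|:y\in P\}$, chooses a near-extremal point $u_0\in P$ with $\|u_0\|>s-r$, declares $y_x:=8u_0$, and traps an arbitrary $u\in P\setminus B$ lying on a segment $[a,b]$ (with $a\in x+B$, $b\in B$) by a half-line exclusion argument: points of the ray $\{v+t(a-b):t\ge0\}$ farther than $5r$ from $v:=(1-\lambda)a+\lambda b$ would have norm exceeding $s$ and hence cannot lie in $P$, which forces $u\in[v_1,b]\subset\mathrm{conv}[(u_0+7B)\cup B]\subset\mathrm{conv}[\{8u_0\}\cup 8B]$. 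What your approach buys: every step is an explicit estimate, no extremal choice or exclusion argument is needed, and the geometric role of the apex (a definite factor beyond the approximate projection of the capsule's end) is cleanly isolated. What the paper's approach buys: its apex $8u_0$ is manufactured directly from the set $P$ being covered, with no parametrization of the capsule and no distance formula for subspaces. Both arguments yield the same constant $8$.
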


\begin{proof}
If $x=0$, $y_x=0$ works. For
$x\ne0$,
denote $P:=\mathrm{conv}[(x+B)\cup B]\cap Y$ ($\ne\emptyset$) and
$s:=\sup\{\|y\|:y\in P\}$, and
choose $u_0\in P$ such that $\|u_0\|>s-r$.
Observe that $u_0\ne0$ since $s\ge\sup\{\|y\|:y\in B\cap Y\}=r$. We claim
that $y_x=8u_0$ works.

Fix $a_0\in x+B$, $b_0\in B$ and $\lambda\in[0,1]$ such that
$u_0=(1-\lambda)a_0+\lambda b_0$.
It suffices to prove that $P\setminus B\subset \mathrm{conv}[\{y_x\}\cup 8B].$
Given $u\in P\setminus B$, choose $a\in x+B$ and $b\in B$ such that
$u\in[a,b]$. Note that $a\ne b$ since $u\notin B$.
Put $v=(1-\lambda)a+\lambda b$ and observe that $\|v-u_0\|\le 2r$.
Consider the half-line
$H:=\{v+t(a-b): t\ge0\}$. Let $v_1\in H$ be such that $\|v_1-v\|=5r$.
Then $v_1\in u_0+7B$ since
$\|v_1-u_0\|\le\|v_1-v\|+\|v-u_0\|\le 7r$.

We claim that no point $y\in H$ with $\|y-v\|>5r$ can belong to $P$ since it satisfies
$\|y\|>s$.
Indeed, since $v\in[y,b]$,
\begin{align*}
\|y\| &\ge\|y-b\|-\|b\|=\|y-v\|+\|v-b\|-\|b\|
\ge \|y-v\| +\|v\|-2\|b\| \\
& \ge \|y-v\| +\|u_0\|-\|u_0-v\|-2\|b\| > 5r+(s-r)-2r-2r=s.
\end{align*}
Consequently, if $u\in H$ then $u\in[v_1,v]$,
and if $u\in[a,b]\setminus H$ then $u\in[v,b]$. In both cases,
$u\in[v_1,b]\subset\mathrm{conv}[(u_0+7B)\cup B]$.
To finish, observe that $u_0+7B=\frac18(8u_0)+\frac78(8B)$ implies
$$
u\in \mathrm{conv}[(u_0+7B)\cup B]\subset
\mathrm{conv}[\{8u_0\}\cup 8B].
$$
\end{proof}

\begin{theorem}[{\cite[Corollary~4.10]{BMV}} for $X$ Banach]\label{subspace}
Let $Y$ be a closed subspace of a normed linear space $X$ such
that $X/Y$ is separable. Then each continuous convex function
$f\colon Y\to\R$ admits a continous convex extension to $X$.
\end{theorem}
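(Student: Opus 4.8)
The plan is to deduce Theorem~\ref{subspace} from the equivalence in Theorem~\ref{univ}. Since the implication $(ii)\Rightarrow(i)$ is already established there, and condition $(i)$ is precisely the conclusion to be proved, it suffices to verify condition $(ii)$ of Theorem~\ref{univ} under the hypothesis that $X/Y$ is separable. Thus, given an arbitrary sequence $\{C_n\}$ of open convex subsets of $Y$ with $C_n\sipka Y$, I must produce open convex sets $D_n\subset X$ with $D_n\sipka X$ and $D_n\cap Y\subset C_n$. Using Lemma~\ref{L:dn} and a shift of indices I first normalize: assume $0\in C_1$ and $C_n\cc C_{n+1}$ in $Y$ for every $n$, and fix radii $r_n>0$ with $8r_nB_Y\subset C_n$ (possible since $0$ is an interior point of $C_1$ in $Y$).

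Next I bring in separability. Let $Q\colon X\to X/Y$ be the quotient map and choose a countable family $\{x_i\}_{i\in\N}\subset X$ whose images $\{Q(x_i)\}$ are dense in $X/Y$; since every coset must be reached arbitrarily far out, I include all relevant scales (e.g.\ rational multiples of a fixed set of representatives), so that the directions $\{Q(x_i)\}$ sample all of $X/Y$ and not merely a bounded part. The sets $D_n$ will then be assembled from convex hulls of a central ball $r_nB_X$ with translated balls $x_i+r_nB_X$. The engine is Lemma~\ref{kuzeliky}: for each $i$ it produces $y_{x_i}\in Y$ with
\[
\mathrm{conv}[(x_i+r_nB_X)\cup r_nB_X]\cap Y\subset\mathrm{conv}[\{y_{x_i}\}\cup 8r_nB_X].
\]
As $y_{x_i}\in Y$, the right-hand set meets $Y$ in the ``ice-cream cone'' $\mathrm{conv}[\{y_{x_i}\}\cup 8r_nB_Y]$, which lands inside $C_n$ as soon as its apex $y_{x_i}$ lies in $C_n$ (the base $8r_nB_Y$ already does, by the choice of $r_n$, and $C_n$ is convex). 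The plan is therefore to incorporate representatives in stages, admitting $x_i$ into $D_n$ only once the apex $y_{x_i}$ has entered the current $C_m$; the factor $8$ and the mutual interaction of the balls are absorbed by passing to a later index $m$ (legitimate since $C_m\cc C_{m+1}$ and $\bigcup_m C_m=Y$), followed by a re-indexing of the resulting sequence.

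It then remains to check the two defining properties. That the $D_n$ are open, convex and nested is built into the construction, and the covering $\bigcup_n D_n=X$ (in fact $D_n\sipka X$, after one more application of Lemma~\ref{L:dn}) uses density of $\{Q(x_i)\}$ together with convexity: an arbitrary $x\in X$ has $Q(x)$ close to some $Q(x_i)$, and the spike of the capsule reaching out to $y_{x_i}$ --- fed by the ``skimming'' representatives of large norm but small quotient --- combined with the central ball places $x$ in $D_n$ for large $n$. The containment $D_n\cap Y\subset C_n$ is exactly what Lemma~\ref{kuzeliky} secures at each stage.

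The hard part will be controlling the $Y$-trace of a convex combination of several far balls at once: Lemma~\ref{kuzeliky} governs only a single capsule against the central ball, whereas convex combinations of far balls lying in opposite quotient directions can create ``cancellation'' points of $Y$ whose norm is not bounded by the individual cones. The delicate task is thus to organize the construction --- processing the representatives one at a time and re-applying Lemma~\ref{kuzeliky} to the running convex set, or else bounding the cumulative trace stage by stage --- so that $D_n\cap Y$ stays a thin cone inside the (possibly thin) set $C_n$ while the reach of $D_n$ still grows enough to exhaust all of $X$. Reconciling these two competing demands, and managing the attendant bookkeeping of the factor-$8$ growth and the re-indexing among the $C_m$, is the technical heart of the argument.
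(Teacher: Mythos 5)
Your overall strategy is the same as the paper's: verify condition (ii) of Theorem~\ref{univ}, pick representatives of a dense sequence in $X/Y$, and use Lemma~\ref{kuzeliky} to keep the $Y$-trace of the sets $D_n$ inside the $C_n$'s. But the proposal is not a proof: the step you yourself call ``the technical heart'' --- controlling $Y\cap\mathrm{conv}$ of several far balls at once --- is precisely where an idea is needed, and you leave it unresolved. Your suggested remedies would not work as stated: Lemma~\ref{kuzeliky} applies only to a single capsule $\mathrm{conv}[(x+B)\cup B]$ with both balls of the same radius, so ``re-applying it to the running convex set'' has no meaning; and admitting representatives one at a time does not help, since a convex combination of two already-admitted representatives with opposite quotient directions can still produce points of $Y$ of arbitrarily large norm.

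The paper fills this gap with two observations. First, convexify the representatives \emph{before} taking hulls: setting $Z_n:=\mathrm{conv}\{z_1,\dots,z_n\}$, formula \eqref{Val} (applied to the convex sets $Z_n$ and $B=rB_X$) shows that every point of $\mathrm{conv}(Z_n\cup B)$ lies on a single segment $[z,b]$ with $z\in Z_n$, $b\in B$; the feared ``cancellation points'' thus arise inside $Z_n$ itself and are not new uncontrolled points of the hull. Second, $Z_n$ is compact, hence admits a finite $r$-net $F$; each segment $[z,b]$ lies in one capsule $\mathrm{conv}((x+B)\cup B)$ with $x\in F$, Lemma~\ref{kuzeliky} produces finitely many apexes $y_x\in Y$, and one simply chooses the index $k_n$ so large that $C_{k_n}$ contains them all. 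In particular a single radius $r$ with $8rB_Y\subset C_1$ suffices; your bookkeeping of radii $r_n$ is unnecessary, because absorption happens through the index $k_n$, not through the radius. Your two ``competing demands'' are then reconciled by a lag between indices: the paper sets $D_j:=\mathrm{int}[\mathrm{conv}(Z_{n(j)}\cup B\cup C_j)]$, where $k_{n(j)}\le j<k_{n(j)+1}$, so that $D_j\cap Y\subset\mathrm{conv}(C_{k_{n(j)}}\cup C_j)=C_j$ by Lemma~\ref{conv}(a), while $\bigcup_j D_j$ is convex, open and dense (it contains $\tfrac12\bigcup_n(Z_n+Y)$), hence equals $X$ by Lemma~\ref{conv}(b). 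Note finally that your $D_n$ are built from balls alone: including $C_j$ itself in the hull defining $D_j$ is what makes the covering of $Y$, and hence of $X$, come out cleanly.
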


\begin{proof}
It suffices to verify the condition (ii) of
Theorem~\ref{univ}.
Let $C_1\subset C_2\subset\ldots$ be
open convex subsets of $Y$ such that
$\bigcup_n C_n=Y$.
We can (and do) suppose that
$0\in\mathrm{int}_Y\,C_1$. Fix $r>0$ such that
\begin{equation}\label{E:8r}
8 r B_Y\subset C_1.
\end{equation}
Fix a dense sequence $\{\xi_n\}_{n\in\N}\subset X/Y$ and,
for each $n$, choose an arbitrary $z_n\in\xi_n$. The sets
$Z_n:=\mathrm{conv}\{z_1,\ldots,z_n\}$ ($n\in\N$) form a nondecreasing
sequence of compact convex sets such that the union
$\bigcup_n(Z_n+Y)$ is dense in $X$. Define $Z_0=\emptyset$.

{\it Claim.} There exists an increasing sequence of
integers $\{k_n\}_{n\ge0}$ such that $k_0=1$ and, for each $n$,
\begin{equation}\label{E:claim}
\mathrm{conv}(Z_n\cup B)\cap Y\subset C_{k_n}\qquad
\text{where $B=rB_X$.}
\end{equation}
To prove this, we shall proceed by induction with respect to $n$.
Observe that \eqref{E:claim} is satisfied for $n=0$ and $k_0=1$.
Suppose we already have
$k_0,\ldots,k_{n-1}$. Since $Z_n$ is compact, there exists a finite set
$F\subset Z_n$ such that $Z_n\subset F+B$. For any $x\in F$, fix $y_x\in Y$
satisfying \eqref{E:kuzeliky}. Choose an integer $k_n>k_{n-1}$ such that
$y_x\in C_{k_n}$ for each $x\in F$. Then, using \eqref{Val}, we obtain
\[
\mathrm{conv}(Z_n\cup B)=
\bigcup_{z\in Z_n}\mathrm{conv}(\{z\}\cup B)\subset
\bigcup_{x\in F}\mathrm{conv}((x+B)\cup B).
\]
Consequently, using \eqref{E:kuzeliky} and Lemma \ref{conv}(a), we obtain
\begin{align*}
\mathrm{conv}(Z_n\cup B)\cap Y &\subset
\bigcup_{x\in F}[\mathrm{conv}((x+B)\cup B)\cap Y] \\
&\subset
\bigcup_{x\in F}[\mathrm{conv}(\{y_x\}\cup 8B)\cap Y] \\
&=
\bigcup_{x\in F} \mathrm{conv}[(8B\cap Y)\cup \{y_x\}] \subset C_{k_n}
\end{align*}
since, by \eqref{E:8r}, $(8B\cap Y)\cup \{y_x\} \subset C_{k_n}$ for each $x\in F$.
This proves our Claim.

For each $j\in\N$, let $n(j)$ be the unique nonnegative integer
with $k_{n(j)}\le j <k_{n(j)+1}$.
Let us define a nondecreasing sequence $\{D_j\}_{j\in\N}$ of
open convex sets by
$$
D_j:=\mathrm{int}[\mathrm{conv}(Z_{n(j)}\cup B\cup C_j)].
$$
By Lemma~\ref{conv}(a) and \eqref{E:claim}, we have
\begin{align*}
Y\cap D_j&\subset
Y\cap \mathrm{conv}(Z_{n(j)}\cup B\cup C_j)\\
&=
Y\cap\mathrm{conv}[\mathrm{conv}(Z_{n(j)}\cup B)\cup C_j]
\\&=
\mathrm{conv}\bigl\{[Y\cap\mathrm{conv}(Z_{n(j)}\cup B)]\cup C_j\bigr\}
\\&\subset
\mathrm{conv}\{C_{k_{n(j)}}\cup C_j\}=C_j.
\end{align*}
It remains to prove that $\bigcup_j D_j=X$. By Lemma~\ref{conv}(b), this
is equivalent to say
that $\bigcup_j D_j$ is dense in $X$. Since, for each $j$, $D_j$ is
dense in $\tilde{D}_j:=\mathrm{conv}(Z_{n(j)}\cup B\cup C_j)$, it
suffices to show that $\bigcup_j \tilde{D}_j$ is dense.
Note that $H:=\frac12\,\bigcup_n(Z_n+Y)$ is dense in $X$ since
$\bigcup_n(Z_n+Y)$ is dense.
If $h\in H$ then
$h=\frac12(z+y)$ with $z\in Z_n$ for some $n$, and $y\in Y$. Then, for
sufficiently large $j$, we have $z\in Z_{n(j)}$ and $y\in C_j$, and
hence $h\in \tilde{D}_j$. Consequently,  $\bigcup_j \tilde{D}_j$ 
is dense since it
contains $H$.
\end{proof}

%%%%%%%%%%%%%%%%%%%%%%%%%%%%%%%%%%%%%%%%%%%%%%%%%%%%%%%%%%%%%%%%%%%%%%%%%%%%%%%%%%%%%%%%

\bigskip
\bigskip

\subsection*{Acknowledgement}
The research of the first author was partially supported by the Ministero
dell'Istruzione, dell'Universit\`a e della Ricerca
 of Italy.
The research of the second author was partially supported by the grant
GA\v CR 201/06/0198  from the Grant Agency of
Czech Republic and partially supported by the grant MSM  0021620839 from
 the Czech Ministry of Education.

\end{document}